\newlist{steps}{enumerate}{1}
\setlist[steps, 1]{label = Step \arabic*:}
\newtheorem{theorem}{Theorem}[section]
\newtheorem{corollary}{Corollary}[section]
\newtheorem{lemma}{Lemma}[section]
\newtheorem{proposition}{Proposition}[section]
\newtheorem{remark}{Remark}[section]
\newtheorem{example}{Example}[section]
\newtheorem{definition}{Definition}[section]
\renewcommand{\labelenumi}{(\theenumi)}
\numberwithin{equation}{section}
\numberwithin{equation}{section}
\newcommand{\R}{\mathbb R}
\newcommand{\C}{\mathbb C}
\newcommand{\id}{\operatorname{id}}
\newcommand{\tr}{\operatorname{tr}}
\newcommand{\diag}{\operatorname{diag}}
\renewcommand{\Re}{\operatorname{Re}}
\renewcommand{\Im}{\operatorname{Im}}
\newcommand{\SU}{\mathrm {SU}(2)}
\numberwithin{equation}{section}
\begin{document}
\title{The evolution of a curve induced by the Pohlmeyer-Lund-Regge equation}
\dedicatory{}
\author[S.-P.~Kobayashi]{Shimpei Kobayashi}
 \address{Department of Mathematics, Hokkaido University, Sapporo, 060-0810, Japan}
 \email{shimpei@math.sci.hokudai.ac.jp}
\author[Y.~Kogo]{Yuhei Kogo}
\address{Department of Mathematics, Hokkaido University, Sapporo, 060-0810, Japan}
\email{yh1123\_kg\_5813@eis.hokudai.ac.jp}
\author[N.~Matsuura]{Nozomu Matsuura}
\address{Department of Applied Mathematics, Fukuoka University, Fukuoka, 814-0180, Japan}
\email{nozomu@fukuoka-u.ac.jp} 
\thanks{The named first author is partially supported by Kakenhi 22K03304 and 
the second named author is partially supported by JST SPRING, Grant Number JPMJSP2119 and
 the third named author is partially supported by Kakenhi 23K03125.}
\subjclass[2020]{Primary~53A04, Secondary~37K10}
\keywords{Curve evolution; Pohlmeyer-Lund-Regge equation; solitons}

 \date{\today}
\pagestyle{plain}
\begin{abstract}
This paper investigates the evolution of space curves governed by the Pohlmeyer-Lund-Regge (PLR) equation, an integrable extension of the sine-Gordon equation.
We examine a specific type of curve evolution, known as the Lund-Regge evolution, and derive its representation in the Frenet frame.
We show the Frenet frame evolution aligns with the Lax system of the PLR equation and develop a construction method for curve families via the Sym formula. In conclusion, we describe the Lund-Regge evolution corresponding to the Date multi-soliton solutions to the PLR equation, with illustrations of curves and surfaces.
  \end{abstract}
\maketitle    

\section{Introduction}
The study of space curve evolution contributes to differential geometry, with applications extending to fluid dynamics and integrable systems.  One example is the \textit{vortex filament}, which has a notable impact on fluid dynamics \cite{Ricca}.
 Representing the vortex filament by $\gamma$,
 the associated system of partial differential equations
 governing its evolution is given by 
 $\dot{\gamma} = \gamma^{\prime \prime} \times \gamma^{\prime}$,
 where the derivatives with respect to the curve and time parameters are denoted by $'$ and $\cdot$, respectively, 
 and $\times$ denotes the cross product in $\mathbb{R}^3$.
 Using the Frenet frame, this equation can be reformulated with curvature $\kappa$ and the binormal vector $B$, resulting in $\dot{\gamma} = \kappa B$, also known as the \textit{binormal motion} of the filament \cite{JerrardSmets}. 
 
 Hasimoto \cite{Hashimoto} introduced a transformation, which 
  effectively reduced the nonlinear PDEs for curvature $\kappa$ and torsion $\tau$ of
 the vortex filament to the nonlinear Schrödinger equation, $i \dot q + q^{\prime \prime}+ 2|q|^2q = 0$,
 where $i = \sqrt{-1}$ is the imaginary unit, and the complex-valued function $q$ is defined
 by $q = \kappa \exp\left(i\int^s \tau ds\right)$ (the \textit{Hasimoto transformation}). 
 This nonlinear Schr\"odinger equation was widely recognized as an integrable system, and both its soliton and quasi-periodic solutions have been extensively studied \cite{Belokolos}. Calini and Ivey \cite{CaliniIvey} have rigorously explored the differential geometric properties of vortex filaments.
   
 Parallel to the study of vortex filaments, 
 another significant equation arises in the 
 context of space curve evolution, namely, the sine-Gordon equation \cite{Lamb}: This equation, well-established in the study of constant negative curvature surfaces in three-dimensional 
 Euclidean space \cite{Mclachlan,Sym}  and recognized in quantum field theory as the sine-Gordon model  \cite{Lund}, constitutes a foundational integrable system.
 In 1978, Lund and Regge \cite{LundRegge} discovered a generalization of the sine-Gordon equation 
 and formulated it via the time evolution of a space curve $\gamma$ as the following system of nonlinear partial differential equations
\[
 \dot{\gamma}^{\prime} = \gamma^{\prime} \times \dot{\gamma}.
\]
We henceforth refer to this equation as the \textit{Lund-Regge evolution}. When a curve $\gamma$ moves according to the Lund-Regge evolution, its curvature and torsion satisfy a special system of partial differential equations that extends the sine-Gordon equation.
This system, also derived by Pohlmeyer \cite{Pohlmeyer} from a viewpoint of integrable systems theory, is now known as the \textit{Pohlmeyer-Lund-Regge {\normalfont (}PLR {\normalfont for short)} equation}. In 1982, Date \cite{Date} applied an extension of Krichever's method to construct soliton and quasi-periodic solutions to the PLR equation, 
introducing an approach to finding solutions by directly solving a special linear equation through Cramer’s rule, a technique subsequently referred to as the \textit{Date direct method} (see Section \ref{sbsc:Date}  for further details). Figure \ref{fig:2-sliton} describes a surface 
induced by a $2$-soliton solution of the PLR equation.

This paper establishes the framework for space curve evolution using the Frenet frame, alongside the partial differential equations that describe the evolution of curvature and torsion (see Section \ref{sbsc:spacecurve}). We further reformulate these representations using the $2 \times 2$ matrix model through the identification $\mathbb{R}^3 \cong \mathfrak{su}(2)$. Additionally, we summarize the PLR equation and its associated Lax system, with a particular emphasis on the Date construction of $N$-soliton solutions 
(Sections \ref{sbsc:PLR} and \ref{sbsc:Date}).

 \begin{figure}[tb]
        \centering
        \includegraphics[keepaspectratio]{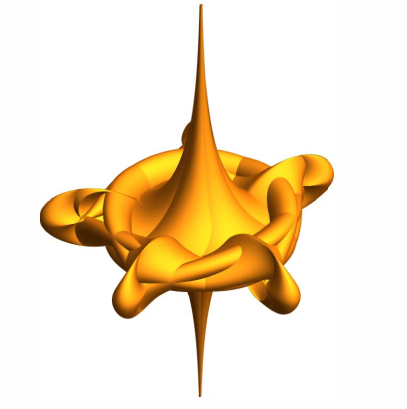}
        \caption{A surface
        induced by a $2$-soliton solution of the Pohlmeyer-Lund-Regge equation, see Section \ref{sbsc:Nsolitoncurves} for details.}
        \label{fig:2-sliton}
\end{figure}

The core of this paper is developed in Section \ref{sc:evoPLR}. In that section, we focus on the $2 \times 2$ matrix-valued Frenet frame associated with the Lund-Regge evolution, and derive the system of partial differential equations induced by its time evolution (Theorem \ref{thm:PLRevolution}). We also define the complex-valued function $q = \kappa \exp \left( i \int^s (\tau - 1) ds \right)$ as an analogue to the Hasimoto transformation. By utilizing an appropriate gauge transformation, we demonstrate that the time evolution of the Frenet frame aligns precisely with the Lax system of the PLR equation, with the spectral parameter set to 1 (see Corollary \ref{coro:PLRevolution}). This correspondence reveals that the function $q$ satisfies the following partial differential equation:
\[
\dot q^{\prime} + \frac{1}{2} q \int^s \left( |q|^2 \right)^{\boldsymbol{\cdot}} ds = 0,
\]
 an equation analogous to the nonlinear Schr\"odinger equation, and we will refer to this equation as also the \textit{PLR equation}.
  Moreover, we show that by taking the logarithmic derivative of the Lax equation solutions with respect to the spectral parameter, a family of space curves meeting the Lund-Regge evolution can be constructed (Theorem \ref{thm:Representation}, also known as the  \textit{Sym formula}).
When the torsion is identically $\tau = 1$, the dependent variable $q$ in the PLR equation degenerates to a real-valued function, and in this case the PLR equation itself is reduced to the sine-Gordon equation (Remark \ref{rem} \ref{rem:sG}).
Additionally, we will describe the Lund-Regge evolution corresponding to the Date $N$-soliton solutions, while presenting illustrations of curves and surfaces as visual aids to support our explanation (Proposition \ref{prp:Nsolitoncurve} and Figures \ref{fig:1-3swptsurfaces}, \ref{fig:4PLRsoliton}, \ref{fig:4sGsoliton}).

\section{Preliminaries}\label{sc:Pre}
 In this section, we will first collect basic results on a curve evolution in the Euclidean three-space.
 We will then recall the Pohlmeyer-Lund-Regge equation in terms of the Lax pair 
 and its $N$-soliton solutions \cite{Date}.
 \subsection{Evolutions of space curves}\label{sbsc:spacecurve}
 For  a given regular space curve $\gamma$ in the Euclidean three-space, let us consider the
 Frenet frame
\[
T = \frac{\gamma^{\prime}}{|\gamma^{\prime}|},\quad
N =  B \times T, \quad 
B = \frac{\gamma^{\prime} \times \gamma^{\prime \prime}}{|\gamma^{\prime} \times \gamma^{\prime \prime}|},
\]
where $\prime$ denotes the derivative with respect to the parameter of the curve $\gamma$ and 
$\times$ denotes the Euclidean cross-product. Then it is well known \cite{doCarmo}
that the  curvature $\kappa$ and the torsion $\tau$ of $\gamma$ can be represented by 
\begin{equation}\label{eq:kappatau}
\kappa = \frac{|\gamma^{\prime} \times \gamma^{\prime \prime}|}{|\gamma^{\prime}|^3}, 
\quad
\tau = 
\frac{\left\langle\gamma^{\prime}\times  \gamma^{\prime \prime},
\gamma^{\prime \prime \prime}\right\rangle
}
{|\gamma^{\prime} \times \gamma^{\prime \prime}|^2},
\end{equation}
where $\langle \cdot,\cdot \rangle$ is the inner product in $\R^3$.
Note that the Frenet frame field $F = (T, N, B)$ takes values in
\[\operatorname{SO}(3, \R)
=\{A \in M(3, \R) \mid A A^{\mathsf{T}} = \id,\, \det A =1\}. 
\]
Then by the Frenet-Serret equation, $F$ satisfies 
\begin{equation}\label{eq:Frenet-serre}
F^{\prime} =FL, \quad L = \begin{pmatrix} 
0 & -|\gamma^{\prime}| \kappa  &  0 \\
|\gamma^{\prime}| \kappa  &  0 & - |\gamma^{\prime}| \tau \\
0 & |\gamma^{\prime}| \tau & 0
\end{pmatrix}.
\end{equation}
Let us consider an evolution of the curve $\gamma$ and its derivative will be denoted by the dot, and 
assume $\gamma$ is regular for the evolution\footnote{
Strictly speaking, we should use a different symbol for the evolution of $\gamma$ with a deformation parameter
$|t| \leq \epsilon$ with sufficiently small $\epsilon >0$, that is, 
 $\tilde \gamma$ such that $\tilde \gamma|_{t=0} = \gamma$, however, we denote the evolution of 
 $\gamma$ by the same symbol.
 }, and  the Frenet frame is defined along $\gamma $ as $F = (T, N, B)$ and it 
satisfies \eqref{eq:Frenet-serre}. 
We denote the infinitesimal deformation of $\gamma$ by 
\begin{equation}\label{eq:gammadot}
\dot \gamma = a T + b N + c B,    
\end{equation}
 where $a, b, c$ are functions of two variables with respect to the curve parameter and the deformation parameter.
Then it can be verified that the evolution of the Frenet frame $F$ can be represented as
\begin{equation}\label{eq:Fevo}
    \dot F = F M, \quad M = \begin{pmatrix}
 0 & *&* \\        
 \frac{b^{\prime}}{|\gamma^{\prime}|} + a \kappa -c \tau & 0& *\\        
 \frac{c^{\prime}}{|\gamma^{\prime}|} + b \tau   & 
 \left(\frac{c^{\prime}}{|\gamma^{\prime}|} + b \tau \right)^{\prime} \frac{1}{|\gamma^{\prime}| \kappa} +
 \left(\frac{b^{\prime}}{|\gamma^{\prime}|} + a \kappa - c \tau  \right) \frac{\tau}{\kappa}&0 
    \end{pmatrix},
\end{equation}
 see Appendix \ref{sbsc:evo} for details.
 Here we omit the upper right entries of $M$ and they are determined by the condition $M$
 taking values in 
 \[
 \mathfrak {so}(3, \R) = \{A \in M(3, \R) \mid A + A^{\mathsf{T}} = 0 \}.
 \]
 The compatibility condition $(\dot F)^{\prime} = {(F^{\prime})}^{\boldsymbol{\cdot}}$ implies the following proposition.
\begin{proposition}\label{prp:curve}
Let $\gamma$ be an evolution of a space curve given in \eqref{eq:gammadot}
and denote the length element of 
$\gamma$ by 
\[
\ell = |\gamma^{\prime}| >0.
\]
Then the following system of partial differential equations holds:
\begin{align}
\label{eq:compatibility1}
\dot \ell  &= a^{\prime} - b \ell \kappa, 
\\
\label{eq:compatibility2}
(\ell \kappa)^{\boldsymbol{\cdot}} &= \left(\frac{b^{\prime}}{\ell} + a \kappa -c \tau \right)^{\prime}
- \left(\frac{c^{\prime}}{\ell} + b\tau \right) \ell \tau, \\   
\label{eq:compatibility3}
(\ell \tau)^{\boldsymbol{\cdot}} &= 
\left(\frac{c^{\prime}}{\ell} + b\tau \right) \ell \kappa +
\left\{
\left(\frac{c^{\prime}}{\ell} + b \tau \right)^{\prime} \frac1{\ell \kappa}
+\left(\frac{b^{\prime}}{\ell} + a \kappa - c \tau \right) \frac{\tau}{\kappa}
\right\}^{\prime}.
\end{align}    
 Conversely, let $a, b, c, \kappa, \tau, \ell$ be functions of two variables such that 
 the equations \eqref{eq:compatibility1}, \eqref{eq:compatibility2} and \eqref{eq:compatibility3}
 hold. Then there exists the evolution of a space curve $\gamma$ such that the length element, 
 the curvature, and the torsion of 
 $\gamma$ are given by  $\ell = |\gamma^{\prime}|$ and $\kappa$ and $\tau$, respectively. 
 Furthermore, the deformation of $\gamma$ is represented as in \eqref{eq:gammadot}.
\end{proposition}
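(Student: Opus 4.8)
The plan is to treat the two directions separately, in both cases exploiting the two linear systems $F^{\prime} = FL$ from \eqref{eq:Frenet-serre} and $\dot F = FM$ from \eqref{eq:Fevo}. For the forward direction I would first compare the two ways of forming the mixed derivative of $\gamma$: differentiating $\gamma^{\prime} = \ell T$ in the deformation parameter gives $(\gamma^{\prime})^{\boldsymbol\cdot} = \dot\ell\, T + \ell\, \dot T$, while differentiating $\dot\gamma = aT + bN + cB$ in the curve parameter and substituting the Frenet-Serret relations $T^{\prime} = \ell\kappa N$, $N^{\prime} = -\ell\kappa T + \ell\tau B$, $B^{\prime} = -\ell\tau N$ together with $\dot T = \bigl(\tfrac{b^{\prime}}{\ell} + a\kappa - c\tau\bigr) N + \bigl(\tfrac{c^{\prime}}{\ell} + b\tau\bigr) B$ (the first column of $M$) gives a second expression for it. Matching $T$-components yields precisely \eqref{eq:compatibility1}, while the $N$- and $B$-components agree automatically because the entries $M_{21}, M_{31}$ in \eqref{eq:Fevo} were chosen for exactly that. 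The other two equations come from the frame compatibility $(\dot F)^{\prime} = (F^{\prime})^{\boldsymbol\cdot}$, which after cancelling $F$ is the zero-curvature relation $\dot L = M^{\prime} + LM - ML$. Since $L$ and $M$ are $\mathfrak{so}(3,\R)$-valued this is a skew-symmetric identity with three independent scalar components; a short computation shows the $(1,3)$-component holds identically --- this is what forces the particular form of the $(3,2)$-entry of $M$ --- and the $(1,2)$- and $(2,3)$-components are exactly \eqref{eq:compatibility2} and \eqref{eq:compatibility3}.

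For the converse I would assume $a, b, c, \kappa, \tau, \ell$ are given on a simply connected domain of the $(s,t)$-plane, with $\ell > 0$ and $\kappa > 0$ (the latter being needed both for the entries of $M$ to be defined and for a Frenet frame to exist), and satisfying \eqref{eq:compatibility1}--\eqref{eq:compatibility3}. Define $L$ and $M$ by the formulas in \eqref{eq:Frenet-serre} and \eqref{eq:Fevo}. By the computation just described, \eqref{eq:compatibility2}--\eqref{eq:compatibility3} are exactly the integrability condition $\dot L = M^{\prime} + LM - ML$ of the overdetermined linear system $F^{\prime} = FL$, $\dot F = FM$, so Frobenius' theorem produces a unique solution $F$ with any prescribed initial value. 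Taking the initial value in $\operatorname{SO}(3,\R)$ and using that $L, M$ are skew-symmetric gives $(FF^{\mathsf T})^{\prime} = (FF^{\mathsf T})^{\boldsymbol\cdot} = 0$, hence $FF^{\mathsf T} \equiv \id$, and $\det F \equiv 1$ by continuity; write $F = (T, N, B)$.

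Next I would introduce the $\R^3$-valued one-form $\omega := \ell T\, \d s + (aT + bN + cB)\, \d t$ and verify that it is closed: after substituting $F^{\prime} = FL$ and $\dot F = FM$, equality of its mixed partials reduces to \eqref{eq:compatibility1} in the $T$-direction and to the defining formulas for $M_{21}, M_{31}$ in the $N$- and $B$-directions. The Poincaré lemma then yields $\gamma$ with $\d\gamma = \omega$, unique up to an additive constant. It remains to identify the geometry of $\gamma$: from $\gamma^{\prime} = \ell T$ with $|T| = 1$ we get $|\gamma^{\prime}| = \ell$; from $\gamma^{\prime\prime} = \ell^{\prime} T + \ell^2\kappa N$ we get $\gamma^{\prime} \times \gamma^{\prime\prime} = \ell^3\kappa\, B$, so the curvature of $\gamma$ is $\kappa$ by \eqref{eq:kappatau}; one further differentiation, using $N^{\prime} = -\ell\kappa T + \ell\tau B$, gives via \eqref{eq:kappatau} that the torsion is $\tau$. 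Hence $F$ is the Frenet frame of $\gamma$, and $\dot\gamma = aT + bN + cB$ by construction of $\omega$.

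I expect the only real obstacle to be bookkeeping rather than ideas: one must check carefully that the explicit entries of $M$ in \eqref{eq:Fevo} --- above all the cumbersome $(3,2)$-entry --- are exactly what makes the $(1,3)$-component of $\dot L = M^{\prime} + LM - ML$ vanish identically and the $N$- and $B$-components of $\d\omega = 0$ hold automatically, so that \eqref{eq:compatibility1}--\eqref{eq:compatibility3} are not merely necessary but also sufficient for the existence of $\gamma$. This is the content of Appendix~\ref{sbsc:evo}; granting it, the rest is a routine application of Frobenius' theorem and the Poincaré lemma.
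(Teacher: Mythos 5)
Your proposal is correct and follows essentially the same route as the paper: the forward direction extracts \eqref{eq:compatibility1} from the mixed derivative of $\gamma$ and \eqref{eq:compatibility2}--\eqref{eq:compatibility3} from the zero-curvature condition $\dot L = M^{\prime} + LM - ML$, exactly as in the paper's proof. For the converse, your splitting into Frobenius' theorem for the $\operatorname{SO}(3,\R)$-valued frame system plus the Poincar\'e lemma for the closed one-form $\ell T\,\d s + (aT+bN+cB)\,\d t$ is just an unpacked version of the paper's single $4\times 4$ augmented system $\tilde F^{\prime}=\tilde F\tilde L$, $\dot{\tilde F}=\tilde F\tilde M$ (whose last column records the curve), and your explicit verification via \eqref{eq:kappatau} that $\ell,\kappa,\tau$ are indeed the length element, curvature and torsion is a welcome extra detail that the paper leaves implicit.
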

\begin{proof}
  First, we show the equation \eqref{eq:compatibility1}. From \eqref{eq:Frenet-serre} and \eqref{eq:gammadot}, the following equation holds:
  \begin{equation}\label{eq:gammadotprime}
  \dot \gamma^{\prime} = (a^{\prime} - b |\gamma^{\prime}| \kappa) T + (b^{\prime} + a |\gamma^{\prime}| \kappa - c |\gamma^{\prime}| \tau) N + (c^{\prime} + b |\gamma^{\prime}| \tau) B.
  \end{equation}
  Thus $\langle \dot \gamma^{\prime}, \gamma^{\prime} \rangle = |\gamma^{\prime}| (a^{\prime} - b |\gamma^{\prime}| \kappa)$ holds, and we have
$|\gamma^{\prime}|^{\boldsymbol{\cdot}} = a^{\prime} - b |\gamma^{\prime}| \kappa$, which is \eqref{eq:compatibility1} since we set $|\gamma^{\prime}| = \ell$. Next, we will calculate the compatibility condition of $L$ and $M$ in \eqref{eq:Frenet-serre} and \eqref{eq:Fevo}. The compatibility condition $[L,M] + M^{\prime} - \dot L = 0$ is
  \begin{equation}\label{eq:so3generalLMcomp}
       \begin{pmatrix}
        0 & *&* \\        
        -l_{32} m_{31} + m_{21}^{\prime} - \dot l_{21} & 0& *\\        
        l_{32} m_{21} - l_{21} m_{32} + m_{31}^{\prime}   & 
        l_{21} m_{31} + m_{32}^{\prime} - \dot l_{32}&0 
        \end{pmatrix}
        = 0,
  \end{equation}
  where $l_{ij}, m_{ij} (i, j \in \{1, 2, 3\})$ denote the $(i, j)$-entries of $L$ and $M$, respectively. Since $L$ and $M$ take values in $\mathfrak{so}(3,\R)$, the upper right entries of the left-hand side of \eqref{eq:so3generalLMcomp} are determined by the lower left ones. We denote these by $\ast$ as they need not be considered. A straightforward computation shows that the $(3,1)$-entry of the left-hand side is zero.
Therefore \eqref{eq:so3generalLMcomp} reduces to the pair of equations
  \begin{gather*}
   - |\gamma^{\prime}| \tau \left(\frac{c^{\prime}}{|\gamma^{\prime}|} + b \tau\right) + \left(\frac{b^{\prime}}{|\gamma^{\prime}|} + a \kappa - c \tau\right)^{\prime} - (|\gamma^{\prime}| \kappa)^{\boldsymbol{\cdot}} = 0,\\
     |\gamma^{\prime}| \kappa \left(\frac{c^{\prime}}{|\gamma^{\prime}|} + b \tau\right) + \left(\left(\frac{c^{\prime}}{|\gamma^{\prime}|} + b \tau \right)^{\prime} \frac{1}{|\gamma^{\prime}| \kappa} +
    \left(\frac{b^{\prime}}{|\gamma^{\prime}|} + a \kappa - c \tau  \right) \frac{\tau}{\kappa}\right)^{\prime} - (|\gamma^{\prime}| \tau)^{\boldsymbol{\cdot}} = 0.
  \end{gather*}
  These equations are \eqref{eq:compatibility2} and \eqref{eq:compatibility3}, respectively. Conversely, let $\Tilde{L}$ and $\Tilde{M}$ be $4 \times 4$ matrices
  \begin{equation}
      \Tilde{L} = 
        \begin{pmatrix}
            &&&\ell \\
            &L&&0\\
            &&&0\\ 
            0&0&0&0
        \end{pmatrix},\quad
        \Tilde{M} = 
        \begin{pmatrix}
            &&&a\\
            &M&&b\\
            &&&c\\ 
            0&0&0&0
        \end{pmatrix},
  \end{equation}
  where $3\times 3$ matrices $L$ and $M$ are defined in \eqref{eq:Frenet-serre} and \eqref{eq:Fevo} with $|\gamma^{\prime}| = \ell $, respectively. Note that $a, b, c$ are arbitrary functions of two variables.
  Then it can be verified that the compatibility conditions of $\Tilde{L}$ and $\Tilde{M}$ are \eqref{eq:compatibility1}, \eqref{eq:compatibility2}, and \eqref{eq:compatibility3}. Therefore there exists a $4\times 4$ matrix-valued function $\tilde{F} = \tilde{F}(s,t)$ that is a solution to the partial differential equations:
  \begin{equation}
      \Tilde{F}^{\prime} = \Tilde{F}\Tilde{L},\quad  \dot{\Tilde{F}} = \Tilde{F}\Tilde{M},
  \end{equation}
  and by choosing $\Tilde{F}(0,0) = \textrm{id}_4$, we have
  \begin{equation}
      \Tilde{F} = 
      \begin{pmatrix}
          &&&\Tilde{F}_{14}\\
            &F &&\Tilde{F}_{24}\\
            &&&\Tilde{F}_{34}\\ 
            0&0&0&1
      \end{pmatrix}.
  \end{equation}
  Then $\gamma(s,t) := (\Tilde{F}_{14}(s,t),\Tilde{F}_{24}(s,t),\Tilde{F}_{34}(s,t))^{\mathsf{T}}$ 
  gives the evolution of a curve by \eqref{eq:gammadot}.
\end{proof}
We now identify $\mathfrak{so}(3, \R)$ with 
 \[
 \mathfrak{su}(2) =
  \{ X \in \mathfrak{sl}(2, \mathbb C) \mid  X + \overline{X}^{\mathsf{T}} = 0 \}
  \]
 by the following explicit map:
\begin{equation}\label{eq:identification}
\begin{pmatrix} 
0 & -p & -q \\
p & 0 & -r \\
q & r & 0 
\end{pmatrix} 
\in \mathfrak{so}(3, \mathbb R) \longleftrightarrow 
\frac{1}{2}
\begin{pmatrix} 
i r &  -p - i q \\
p- i q &  -ir 
\end{pmatrix}  
\in \mathfrak{su}(2).
\end{equation}
 Note that, under the above identification, the evolution of the Frenet frame $F$ taking values in $\operatorname{SO}(3,\R)$
 can be identified with a map $\mathcal F$ taking values in $\operatorname{SU}(2)$, see 
 for an explicit correspondence \cite{Inoguchi}.
 We now arrive at the following fundamental theorem of an evolution in $\mathfrak{su}(2)  \cong \R^3$.
\begin{theorem}
 Let $\mathcal F$ be a map taking values in $\operatorname{SU}(2)$ 
 by the correspondence in \eqref{eq:identification} to the evolution of the Frenet frame $F$ 
 given by \eqref{eq:Frenet-serre} and \eqref{eq:Fevo}. 
 Then the following system of partial differential equations holds:
\begin{equation}\label{eq:SU2Frame}
\mathcal F^{\prime} = \mathcal F\mathcal  L, \quad 
\dot {\mathcal F} = \mathcal F \mathcal  M,
\end{equation}
with 
\begin{align}\label{eq:LM}
\mathcal L = 
\frac{ |\gamma^{\prime}| }2
\begin{pmatrix} 
i \tau &  -\kappa  \\
\kappa   & -i \tau 
\end{pmatrix}, \quad
\mathcal M
= 
\frac12
\begin{pmatrix} 
\frac{i}{\kappa} \left(\frac{m_{31}^{\prime}}{|\gamma^{\prime}|} + m_{21} \tau \right) & -m_{21} - i m_{31}  \\
m_{21} - i m_{31}   & 
-\frac{i}{\kappa} \left(\frac{m_{31}^{\prime}}{|\gamma^{\prime}|} + m_{21} \tau \right) 
\end{pmatrix},
\end{align}
 where $m_{ij} (i, j \in \{1, 2, 3\})$ denotes the $(i, j)$-entry of $M$ in \eqref{eq:Fevo}
 which are explicitly given by 
\begin{equation}\label{eq:explicitm}
 m_{21} = \frac{b^{\prime}}{|\gamma^{\prime}|} + a \kappa-c \tau, \quad    
 m_{31} = \frac{c^{\prime}}{|\gamma^{\prime}|} + b \tau. 
\end{equation}
 The compatibility equations of the system \eqref{eq:SU2Frame} are \eqref{eq:compatibility2} and \eqref{eq:compatibility3}.
\end{theorem}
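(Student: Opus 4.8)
The plan is to verify directly that the matrices $\mathcal L$ and $\mathcal M$ in \eqref{eq:LM} are precisely the images of $L$ in \eqref{eq:Frenet-serre} and $M$ in \eqref{eq:Fevo} under the Lie algebra isomorphism \eqref{eq:identification}, and then to transport the compatibility statement of Proposition \ref{prp:curve} through that isomorphism. First I would recall that \eqref{eq:identification} is not merely a linear bijection $\mathfrak{so}(3,\mathbb R)\to\mathfrak{su}(2)$ but a Lie algebra isomorphism: it intertwines the bracket on $\mathfrak{so}(3,\mathbb R)$ (equivalently, the cross product on $\mathbb R^3$ under the standard identification) with the bracket on $\mathfrak{su}(2)$. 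This is the one structural fact that makes the argument work, and I would either cite it from \cite{Inoguchi} or check it on a basis (the images of the three elementary skew-symmetric matrices span $\mathfrak{su}(2)$ and satisfy the $\mathfrak{su}(2)$ bracket relations up to the universal scaling built into the map).

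Next I would apply \eqref{eq:identification} to $L$: with $(p,q,r)=(|\gamma'|\kappa,\,0,\,|\gamma'|\tau)$ one reads off from the right-hand side of \eqref{eq:identification} exactly the matrix $\mathcal L=\tfrac{|\gamma'|}{2}\begin{pmatrix} i\tau & -\kappa\\ \kappa & -i\tau\end{pmatrix}$. Similarly, applying \eqref{eq:identification} to $M$ in \eqref{eq:Fevo}, the lower-left entries $m_{21}=b'/|\gamma'|+a\kappa-c\tau$ and $m_{31}=c'/|\gamma'|+b\tau$ play the roles of $p$ and $q$, while the third independent entry $m_{32}$ — which by \eqref{eq:Fevo} equals $\tfrac{1}{|\gamma'|\kappa}m_{31}'+\tfrac{\tau}{\kappa}m_{21}=\tfrac1\kappa\bigl(\tfrac{m_{31}'}{|\gamma'|}+m_{21}\tau\bigr)$ — plays the role of $r$; substituting into the right-hand side of \eqref{eq:identification} produces exactly $\mathcal M$ as written in \eqref{eq:LM}. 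This identifies the systems $\mathcal F'=\mathcal F\mathcal L$, $\dot{\mathcal F}=\mathcal F\mathcal M$ as the image under the (group-level) isomorphism of $F'=FL$, $\dot F=FM$, which is the content of \eqref{eq:SU2Frame}.

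For the compatibility claim I would argue that, since \eqref{eq:identification} is a Lie algebra isomorphism, the $\mathfrak{su}(2)$-valued compatibility expression $[\mathcal L,\mathcal M]+\mathcal M'-\dot{\mathcal L}$ is the image under \eqref{eq:identification} of the $\mathfrak{so}(3,\mathbb R)$-valued expression $[L,M]+M'-\dot L$; hence the two vanish simultaneously, entry by entry. By the computation already carried out in the proof of Proposition \ref{prp:curve} — specifically \eqref{eq:so3generalLMcomp}, where the $(3,1)$-entry vanishes identically and the $(2,1)$- and $(3,2)$-entries give \eqref{eq:compatibility2} and \eqref{eq:compatibility3} — the nontrivial content of the compatibility of \eqref{eq:SU2Frame} is exactly the pair \eqref{eq:compatibility2}, \eqref{eq:compatibility3}. (Equation \eqref{eq:compatibility1} does not appear here because it came from the $T$-component of $\dot\gamma'$, i.e.\ from the $4\times4$ extension $\tilde L,\tilde M$, not from the $\mathfrak{so}(3)$-part.) I expect the main obstacle to be purely bookkeeping: one must be careful that the $m_{32}$ entry of $M$ is correctly matched to the ``$r$'' slot of \eqref{eq:identification} with the right sign, so that the $i/\kappa$-factor and the combination $m_{31}'/|\gamma'|+m_{21}\tau$ appear with the signs shown in $\mathcal M$; a sign slip there would propagate into the compatibility equations. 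Apart from that, there is no genuine difficulty — the result is a transcription of Proposition \ref{prp:curve} across the fixed isomorphism \eqref{eq:identification}.
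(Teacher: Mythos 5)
Your argument is correct, but it reaches the compatibility statement by a different route than the paper. The paper's own proof works entirely on the $\mathfrak{su}(2)$ side: it writes out the $(1,1)$- and $(2,1)$-entries of $[\mathcal L,\mathcal M]+\mathcal M'-\dot{\mathcal L}=0$ for the $2\times 2$ matrices in \eqref{eq:LM} and checks by direct computation that they are equivalent to \eqref{eq:compatibility2} and \eqref{eq:compatibility3}; the fact that $\mathcal L$ and $\mathcal M$ are the images of $L$ and $M$ under \eqref{eq:identification} is left to the correspondence cited from \cite{Inoguchi}. You instead verify that identification explicitly (your bookkeeping is right: $(p,q,r)=(|\gamma'|\kappa,0,|\gamma'|\tau)$ for $L$, and $(m_{21},m_{31},m_{32})$ with $m_{32}=\tfrac1\kappa(\tfrac{m_{31}'}{|\gamma'|}+m_{21}\tau)$ for $M$, reproducing \eqref{eq:LM} exactly), and then transport the compatibility condition through the Lie algebra isomorphism, so that the theorem becomes a corollary of the $\mathfrak{so}(3)$ computation \eqref{eq:so3generalLMcomp} already done in Proposition \ref{prp:curve} -- including the observation that the identically vanishing $(3,1)$-entry, and not \eqref{eq:compatibility1}, accounts for the missing third equation. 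This buys you freedom from redoing the $2\times 2$ entrywise computation, at the cost of having to certify that \eqref{eq:identification} intertwines brackets (true, and checkable on the basis as you indicate) and commutes with $s$- and $t$-differentiation (immediate, since it is linear with constant coefficients). One small wording caveat: at the group level the correspondence $\operatorname{SU}(2)\to\operatorname{SO}(3)$ is a two-to-one covering homomorphism, not an isomorphism; this does not affect your argument, since all you need is that it is a homomorphism whose differential is \eqref{eq:identification}, so that logarithmic derivatives of $F$ and of its lift $\mathcal F$ correspond, but the phrase ``group-level isomorphism'' should be corrected.
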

\begin{proof}
  The compatibility condition $[\mathcal L, \mathcal M] + \mathcal M ^{\prime} -\dot {\mathcal L} = 0$ is
  \begin{equation}\label{eq:su2generalLMcomp}
        \begin{pmatrix}
          \mathcal L_{12} \mathcal M_{21} - \mathcal M_{12} \mathcal L_{21} + \mathcal M_{11}^{\prime} - (\mathcal L_{11})^{\boldsymbol{\cdot}} & * \\
          \mathcal L_{21} \mathcal M_{11} + \mathcal L_{22} \mathcal M_{21} - \mathcal M_{21} \mathcal L_{11} - \mathcal M_{22} \mathcal L_{21} + \mathcal M_{21}^{\prime} - (\mathcal L_{21})^{\boldsymbol{\cdot}} & *
        \end{pmatrix} = 0,
  \end{equation}
  where $\mathcal L_{ij}, \mathcal M_{ij} (i, j \in \{1, 2\})$ denote the $(i, j)$-entries of $\mathcal L$ and $\mathcal M$ in \eqref{eq:LM}. The second column of the left-hand side of \eqref{eq:su2generalLMcomp} is determined by the first column, so we have omitted it. A straightforward computation shows that the $(1,1)$-entry and the $(2,1)$-entry of the left-hand side in \eqref{eq:su2generalLMcomp} are equivalent to \eqref{eq:compatibility2} and \eqref{eq:compatibility3}, respectively. 
\end{proof}

\subsection{The Pohlmeyer-Lund-Regge equation}\label{sbsc:PLR}
 We now recall the Pohlmeyer-Lund-Regge equation \cite{LundRegge, Pohlmeyer} according to the formulation in
 \cite{Date}.
 Let $\Psi$ be a $2\times 2$ matrix-valued map satisfying the following 
 system of PDEs, the so-called \textit{Lax pair}:
\begin{align}\label{eq:DateLaxpair}
\Psi^{-1} \Psi^{\prime} =  
\frac{1}{2} \begin{pmatrix} i \lambda &  a \\ -\bar a & -i \lambda \end{pmatrix},
 \;\;
\Psi^{-1} \displaystyle   \dot \Psi = 
\frac{i}{2 \lambda }  \begin{pmatrix} \cos u &  -\exp (i \omega) \sin u\\
  -\exp (- i \omega) \sin u & -\cos u,
\end{pmatrix},
\end{align}
 where $\lambda$ is a constant in $\C^{\times} = \C \setminus \{0\}$, the \textit{spectral parameter}, 
 and $u$ is a real-valued function of two variables\footnote{The Lax pair has been transposed and the definition of $a$ is multiplied by $i/2$ in 
 \cite{Date}.}. 
 Moreover, a complex-valued function $a$ and a real-valued function $\omega$ of two variables 
 are given by 
 $u$ and another real-valued function $v$ as
\begin{equation}\label{eq:PLRaomega}
 a = -\frac{\{\exp(i \omega) \sin u\}^{\prime}}{\cos u} 
,\quad \omega^{\prime} = \frac{v^{\prime}  \cos u}{2 \cos^2 (u/2)},\quad
 \dot \omega = \frac{\dot v }{\cos^2 (u/2)},
 \end{equation}
 where we assume 
 $0<u<\pi/2$.
 Note that $\omega$ is defined as the solution to the pair of PDEs in \eqref{eq:PLRaomega}.
 Then, the compatibility conditions of the above Lax pair are
 the so-called \textit{Pohlmeyer-Lund-Regge equation}
:
\begin{equation}\label{eq:PLR}
\dot u^{\prime} - \frac{v^{\prime} \dot v \sin (u/2)}{2 \cos^3 (u/2)} + \sin u  = 0, \quad
\dot v^{\prime} +\frac{u^{\prime} \dot v  + \dot u v^{\prime} }{\sin u}  = 0.
\end{equation}
If the function $v$ 
is constant, then \eqref{eq:PLR} reduces to the sine-Gordon equation:
 \begin{equation}\label{eq:sine-Gordon}
     \dot u^{\prime} + \sin u  = 0.
 \end{equation}
 The matrix-valued function $\Psi$ in \eqref{eq:DateLaxpair}
will be called the \textit{wave function}.

\subsection{The Date \texorpdfstring{$N$-}-solitons}\label{sbsc:Date}
We recall the method of Date in \cite[Section 3]{Date} for constructing $N$-soliton solutions of Pohlmeyer-Lund-Regge equation \eqref{eq:PLR} and the wave function in \eqref{eq:DateLaxpair}. Let us assume that the wave function
\[
 \Psi (s, t, \lambda) =
\begin{pmatrix}
\Psi_{11}(s,t,\lambda) & \Psi_{12}(s,t,\lambda)\\
\Psi_{21}(s,t,\lambda) & \Psi_{22}(s,t,\lambda)
\end{pmatrix}
\] 
 has the following form:
\begin{align*}
    \Psi_{11}(s,t,\lambda)&=\left(\lambda^N+\sum_{j=0}^{N-1}\nolimits\psi_{1j}(s,t)\lambda^j\right)\exp\left\{\frac{i}{2}(\lambda s +\lambda^{-1}t)\right\},\\
    \Psi_{21}(s,t,\lambda)&=-\left(\sum_{j=0}^{N-1}\nolimits\psi_{2j}(s,t)\lambda^j\right)\exp\left\{-\frac{i}{2}(\lambda s +\lambda^{-1}t)\right\},\\
    \Psi_{12}(s,t,\lambda)&=-\overline{\Psi_{12}(s,t,\bar\lambda)},\quad \Psi_{22}(s,t,\lambda)=\overline{\Psi_{11}(s,t,\bar\lambda)},
\end{align*}
 where $\psi_{ij}(s,t) \left(i \in \{1, 2\}, j \in \{0, \dots, N-1\}\right)$ are $\lambda$-independent complex-valued functions. 
For each number $j \in \{1, \dots, N\}$, let $\alpha_j$ be mutually distinct complex numbers such that their imaginary parts $\Im \alpha_j$ have the same sign for all $j$, and let $c_j$ be arbitrary complex numbers. Then, we consider the solution $\psi = (\psi_{10},\cdots,\psi_{1,N-1},\psi_{20},\cdots,\psi_{2,N-1})^{\mathsf{T}} \in \mathbb{C}^{2N}$ of the following linear equation:
\begin{equation}\label{eq:linearequation}
T_0\psi = b.
\end{equation}
 Here the vector $b \in \mathbb{C}^{2N}$ is defined by
\begin{align*}
b =-\left((\alpha_1)^Ne(\alpha_1),\dots,(\alpha_N)^Ne(\alpha_N), 
 (\overline{\alpha_1})^N \overline{c_1} e(\overline{\alpha_1}),\dots, (\overline{\alpha_N})^N \overline{c_N} e(\overline{\alpha_N})\right)^{\mathsf{T}}
\end{align*}
 with $e(\lambda) =  \exp(\frac{i}2( \lambda s + \lambda^{-1} t ))$, and the $2N \times 2N$ matrix $T_0$ is defined by 
\begin{align*}
T_0 =
\begin{pmatrix}
    EA & -CE^{-1}A \\
    \overline{CE^{-1}A} & \overline{EA}
\end{pmatrix}
\end{align*}
with $N\times N$ matrices
\[
A = \begin{pmatrix}
1 & (\alpha_1)^1 & \cdots & (\alpha_{1})^{N-1}\\
\vdots  & \vdots & \ddots & \vdots \\
1 & (\alpha_N)^1 & \cdots & (\alpha_{N})^{N-1}
\end{pmatrix}, 
\quad 
E = 
\begin{pmatrix}
e(\alpha_1)&&O\\
 &\ddots &\\
 O&&e(\alpha_N)
\end{pmatrix}, \quad 
C = 
\begin{pmatrix}
c_1 &&O\\
 &\ddots &\\
  O&&c_N
\end{pmatrix}.
\]
Note that under the conditions on $\alpha_j$, the coefficient matrix $T_0$ in \eqref{eq:linearequation} is non-singular.
In fact, by Cramer's rule, the functions $\psi_{ij}$ are explicitly computed as follows:
\begin{equation}\label{eq:explicitpsi}
    \psi_{1j}(s,t) = \frac{\det T_{j+1}}{\det T_0} = \frac{d_{j+1}}{d_0}, \quad
    \psi_{2j}(s,t) = \frac{\det T_{N+j+1}}{\det T_0} = \frac{d_{N+j+1}}{d_0}
\end{equation}
for $j \in \{0,\dots,N-1\}$. Here $T_k$ denotes the matrix
formed by replacing the $k$-th column of $T_0$ by the vector $b$ in \eqref{eq:linearequation}, and $d_j$ are defined for $j \in \{0, \dots, N\}$ as
\[
d_j = \det T_j.
\]
Following Date's discussion in \cite[section 3]{Date}, the parameters $\alpha_1,\dots,\alpha_N,c_1,\dots,c_N$ determine a unique wave function $\Psi(s,t,\lambda)$ that satisfies the conditions
\[
(1,c_j)\Psi(s,t,\alpha_j)=0
\]
for all $j \in \{1,\dots,N\}$.
Furthermore, we can show that the function $\Psi(s,t,\lambda)$ satisfies the linear differential equations
\begin{equation}\label{eq:NsolitonLaxpair}
\left\{ \,
\begin{aligned}
    \Psi^{-1}\Psi^{\prime} &= \frac12
    \begin{pmatrix}
        i\lambda & i \overline{d_{2N}/d_0} \\
        id_{2N}/d_0 & -i\lambda
    \end{pmatrix}, \\
    \Psi^{-1}\dot\Psi &= \frac{i}{2\lambda(|d_1|^2+|d_{N+1}|^2)}
    \begin{pmatrix}
        |d_1|^2-|d_{N+1}|^2 & 2\overline{d_1d_{N+1}} \\
        2d_1d_{N+1} & -|d_1|^2+|d_{N+1}|^2
    \end{pmatrix}.
\end{aligned}
\right.
\end{equation}
Comparing \eqref{eq:NsolitonLaxpair} with \eqref{eq:DateLaxpair}, we have 
that a solution of \eqref{eq:PLR} can be given as follows:
\begin{equation}\label{eq:Nsolitonsolution}
  a=i\,\overline{\left(\frac{d_{2N}}{d_0}\right)},\quad
  u=\arccos\left(\frac{|d_1|^2-|d_{N+1}|^2}{|d_1|^2+|d_{N+1}|^2}\right),\quad
  v=2\arg\overline{\left(\frac{d_{N+1}}{d_0}\right)}+v_0,
\end{equation}
where $v_0 \in \R$,
and moreover $\Psi(s,t,\lambda)$ satisfies \eqref{eq:DateLaxpair}.

For example, to give the $N$-soliton solutions to the sine-Gordon equation, we choose parameters $\alpha_1,\dots, \alpha_N$ and $c_1,\dots, c_N$ such that there exists a suitable permutation $\sigma\in \mathfrak{S}_N$ of $\{1,\dots,N\}$ satisfying the conditions
\begin{equation}\label{eq:sGcondition}
    \overline{\alpha_j} = -\alpha_{\sigma(j)},\quad
    \overline{c_j} = -c_{\sigma(j)}
\end{equation}
for all indices $j$. Then we have for $k \in \{1,\dots,2N\}$ that
\begin{equation}
    \frac{d_{k}}{d_0} = \overline{\left(\frac{d_k}{d_0}\right)}.
\end{equation}
Now \textit{v} is constant by the third equation in \eqref{eq:Nsolitonsolution}, and thus \textit{u} is a solution of the sine-Gordon equation. 

\section{Special curve evolutions and the Pohlmeyer-Lund-Regge equation}\label{sc:evoPLR}
 In this section, we will consider a special curve evolution $\gamma$ in $\R^3$ according to the Pohlmeyer-Lund-Regge 
 equation, the so-called \textit{Lund-Regge} evolution \cite{LundRegge}, see 
 Definition \ref{def:LR}.
 We will show that the Lund-Regge evolution is equivalent to 
 the pair of nonlinear PDEs, see Theorem \ref{thm:PLRevolution}. 
 Next we will rephrase the Pohlmeyer-Lund-Regge equation in \eqref{eq:PLR}, the pair of PDEs,
 in terms of a single PDE 
 with respect to a complex-valued function in Corollaries \ref{coro:PLRevolution}  and \ref{coro:Laxpair}, 
 which is an analogue of the \textit{Hasimoto transformation} $q = \kappa \exp (\int^s \tau ds )$
 of the nonlinear Schr\"odinger equation \cite{Hashimoto}.
 Then the representation formula of the Lund-Regge evolution in terms of the logarithmic derivative of 
 the Lax pair with respect to the spectral parameter, the so-called \textit{Sym-formula} will be given, 
 Theorem \ref{thm:Representation}. Finally, we will give examples of $N$-soliton curves. 
 
\subsection{The Lund-Regge evolution}\label{sbsc:LRevo}
Let us start with the following definition:
\begin{definition}\label{def:LR}
 A space curve evolution $\gamma$ will be called the {\rm Lund-Regge evolution} if 
 the following equation holds:
\begin{equation}\label{eq:Lund-Regge}
\dot \gamma^{\prime} =  \gamma^{\prime} \times \dot \gamma.
\end{equation}
 Furthermore, the Lund-Regge evolution will 
 be called {\rm regular} if $\gamma^{\prime} \times \dot \gamma \neq 0$ holds.
 \end{definition}
\begin{remark}
 The original definition of the Lund-Regge evolution in 
 \cite[(3.2), (3.4) and (3.5)]{LundRegge} requires additional conditions:
\[
|\gamma^{\prime}| = \ell, \quad |\dot \gamma| = \ell^{-1}
\]  
for some positive constant $\ell>0$. However, the following lemma implies that we do not need the constant 
length conditions.
 \end{remark}
\begin{lemma}
 For a regular Lund-Regge evolution $\gamma$, by
 appropriate change of coordinates, 
 \[
 |\gamma^{\prime}| = |\dot \gamma|^{-1} = \ell
 \]
 holds.
\end{lemma}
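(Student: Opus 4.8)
The plan is to extract from the Lund--Regge equation \eqref{eq:Lund-Regge} two conservation-type facts: that $|\gamma'|$ is independent of $t$ and that $|\dot\gamma|$ is independent of $s$, and that the product $|\gamma'|\,|\dot\gamma|$ is a genuine constant. First I would differentiate $|\gamma'|^2 = \langle \gamma', \gamma'\rangle$ with respect to $t$, obtaining $(|\gamma'|^2)^{\boldsymbol{\cdot}} = 2\langle \dot\gamma', \gamma'\rangle = 2\langle \gamma'\times\dot\gamma, \gamma'\rangle = 0$ by \eqref{eq:Lund-Regge} and the basic property of the cross product. Hence $|\gamma'|$ depends on $s$ only, say $|\gamma'| = f(s)$. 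Symmetrically, I would show $|\dot\gamma|$ depends on $t$ only: differentiating $|\dot\gamma|^2$ with respect to $s$ gives $(|\dot\gamma|^2)' = 2\langle \dot\gamma', \dot\gamma\rangle = 2\langle \gamma'\times\dot\gamma,\dot\gamma\rangle = 0$, so $|\dot\gamma| = g(t)$.

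Next I would establish that $f(s)g(t)$ is constant. The natural identity is $\langle \dot\gamma', \dot\gamma'\rangle = |\gamma'\times\dot\gamma|^2 = |\gamma'|^2|\dot\gamma|^2 - \langle\gamma',\dot\gamma\rangle^2$. I would also use $\langle \gamma', \dot\gamma\rangle$: differentiating in $s$ gives $\langle\gamma'',\dot\gamma\rangle + \langle\gamma',\dot\gamma'\rangle = \langle\gamma'',\dot\gamma\rangle$, and differentiating $\langle\gamma',\dot\gamma\rangle$ in $t$ gives $\langle\dot\gamma',\dot\gamma\rangle + \langle\gamma',\ddot\gamma\rangle = \langle\gamma',\ddot\gamma\rangle$; comparing with $\tfrac12(|\dot\gamma|^2)' = 0$ and $\tfrac12(|\gamma'|^2)^{\boldsymbol{\cdot}} = 0$ already used, a little bookkeeping shows $\langle\gamma',\dot\gamma\rangle$ is constant along the evolution. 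The cleanest route, though, is probably to observe that $\dot\gamma'$ is orthogonal to both $\gamma'$ and $\dot\gamma$ (immediate from \eqref{eq:Lund-Regge}), so that differentiating $\langle\gamma',\dot\gamma\rangle$ once in each variable and exploiting $\langle\gamma',\dot\gamma'\rangle = \langle\dot\gamma,\dot\gamma'\rangle = 0$ pins down $\langle\gamma',\dot\gamma\rangle$ as a constant $c_0$; then $f(s)^2 g(t)^2 - c_0^2 = |\dot\gamma'|^2$ is simultaneously a function of $s$ alone and of $t$ alone (since $\dot\gamma'$, being $\gamma'\times\dot\gamma$ with $|\gamma'|$, $|\dot\gamma|$, and the angle between them all constant in the appropriate variable, has constant length), hence a constant; therefore $f(s)g(t)$ is constant.

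Finally I would perform the reparametrisation. Since $|\gamma'| = f(s) > 0$, introduce the new curve parameter $\tilde s = \tilde s(s)$ by $\d\tilde s/\d s = f(s)$; this is an admissible change of coordinates and makes $|\partial_{\tilde s}\gamma| \equiv 1$ after rescaling, but to get the symmetric normalisation $|\gamma'| = |\dot\gamma|^{-1} = \ell$ I would instead rescale both variables by the constant $\sqrt{f g}$ to split the constant evenly: set $\ell$ to be that common constant value of $f g$ (more precisely, rescale $s \mapsto \int^s f$ so $|\gamma'|$ becomes the constant $\ell := (fg)^{1/2}$ — wait, this needs the two-variable rescaling done carefully) so that $|\gamma'| \equiv |\dot\gamma|^{-1} \equiv \ell$. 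One checks the Lund--Regge equation \eqref{eq:Lund-Regge} is invariant under this affine change of the two parameters, so the normalised curve is still a Lund--Regge evolution. The main obstacle I anticipate is the middle step: proving that the \emph{product} $|\gamma'|\,|\dot\gamma|$ is constant (not merely separately a function of one variable each), which requires controlling the cross term $\langle\gamma',\dot\gamma\rangle$; everything else is a direct consequence of the cross-product orthogonality built into \eqref{eq:Lund-Regge}.
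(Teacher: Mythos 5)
Your first paragraph reproduces the paper's argument exactly: from \eqref{eq:Lund-Regge}, $(|\gamma^{\prime}|^2)^{\boldsymbol{\cdot}} = 2\langle \gamma^{\prime}\times\dot\gamma,\gamma^{\prime}\rangle=0$ and $(|\dot\gamma|^2)^{\prime} = 2\langle \gamma^{\prime}\times\dot\gamma,\dot\gamma\rangle=0$, so $|\gamma^{\prime}|=f(s)$ and $|\dot\gamma|=g(t)$, after which the paper concludes directly by a change of coordinates. The trouble is your middle step. The claim that $\langle\gamma^{\prime},\dot\gamma\rangle$ is constant does not follow from the orthogonality you invoke: $\partial_s\langle\gamma^{\prime},\dot\gamma\rangle=\langle\gamma^{\prime\prime},\dot\gamma\rangle+\langle\gamma^{\prime},\dot\gamma^{\prime}\rangle=\langle\gamma^{\prime\prime},\dot\gamma\rangle$, and there is no reason for this to vanish; in the paper's own notation (with $|\gamma^{\prime}|=1$ and $\dot\gamma=aT+bN+cB$) one has $\langle\gamma^{\prime},\dot\gamma\rangle=a$, and the first equation of \eqref{eq:LRcond2} gives $a^{\prime}=b\kappa$, which is generically nonzero. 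The assertion that the angle between $\gamma^{\prime}$ and $\dot\gamma$ is constant in each variable is unjustified for the same reason. Worse, the statement you are aiming for --- that $f(s)g(t)$ is a genuine constant --- is false in general: equation \eqref{eq:Lund-Regge} is invariant under independent reparametrizations $s\mapsto\tilde s(s)$, $t\mapsto\tilde t(t)$ with positive derivatives (both sides scale by the same factor $\tfrac{d\tilde s}{ds}\tfrac{d\tilde t}{dt}$), so reparametrizing a normalized solution nonlinearly in $s$ alone yields a Lund--Regge evolution with $f$ nonconstant and $g$ constant.

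Fortunately that step is also unnecessary, because $|\gamma^{\prime}|\,|\dot\gamma|$ is not a reparametrization invariant and there is nothing to ``split evenly.'' Given $f$ and $g$ from your first paragraph and any $\ell>0$, define $\tilde s$ by $\d\tilde s/\d s=f(s)/\ell$ and $\tilde t$ by $\d\tilde t/\d t=\ell\, g(t)$; regularity gives $f,g>0$, so these are admissible changes of coordinates, and they act independently on the two derivatives, giving $|\partial_{\tilde s}\gamma|=\ell$ and $|\partial_{\tilde t}\gamma|=\ell^{-1}$, while the invariance noted above shows the reparametrized curve still satisfies \eqref{eq:Lund-Regge}. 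Deleting your second paragraph and replacing the hesitant rescaling in the third by this two-variable change of coordinates yields precisely the paper's proof.
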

\begin{proof}
By the equation of Lund-Regge evolution \eqref{eq:Lund-Regge}, we have that
\[
|\gamma^{\prime}|^{\boldsymbol{\cdot}}
= |\dot \gamma|^{\prime} =0,
\]
thus $|\gamma^{\prime}|$ and $|\dot \gamma|$
 depend only on the curve parameter and 
 the evolution parameter, respectively.
 Therefore, the change of coordinates implies the claim.
\end{proof}

\begin{proposition}
Let $\gamma$ be a Lund-Regge evolution  such that $|\gamma^{\prime}| =1$.
Expressing the direction of the evolution as
$\dot{\gamma} = aT+bN+cB$ via the Frenet frame, the defining equation \eqref{eq:Lund-Regge} is equivalent to the following equations:
\begin{equation}\label{eq:LRcond2}
 a^{\prime} - b \kappa  =0, \quad     
 b^{\prime} + a \kappa  - c \tau = -c, \quad     
 c^{\prime} + b \tau = b.
\end{equation}
 Moreover, the matrices $\mathcal L$ and $\mathcal M$ in \eqref{eq:SU2Frame}
are simplified as
\begin{align}\label{eq:LM2}
\mathcal L = 
\frac{1}{2}
\begin{pmatrix} 
i \tau& -\kappa\\
\kappa   &-i \tau
\end{pmatrix}, \quad 
\mathcal M
= 
\frac{1} {2 }
\begin{pmatrix} 
\frac{i}{\kappa} \left( b^{\prime} - c \tau \right)  &  c - i b   \\
-c - i b   & -\frac{i}{\kappa} \left( b^{\prime}   - c \tau \right)  
\end{pmatrix}
\end{align}
with the compatibility condition
\begin{equation}\label{eq:LRcomp}
\dot  \kappa = - b, \quad
 \dot \tau  =- \left( \frac{c}{\kappa}\right)^{\prime}.
\end{equation}
\end{proposition}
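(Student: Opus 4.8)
The plan is to extract the three scalar equations \eqref{eq:LRcond2} directly from \eqref{eq:Lund-Regge} by expanding both sides in the Frenet frame, and then to substitute these relations into the general formulas \eqref{eq:Fevo}, \eqref{eq:LM}, \eqref{eq:explicitm} and into the compatibility system of Proposition \ref{prp:curve}. First I would compute the left-hand side $\dot\gamma^{\prime}$: since $|\gamma^{\prime}|=1$ is assumed (so $\ell=1$ and hence $\ell^{\boldsymbol\cdot}=0$ already forces $a^{\prime}-b\kappa=0$ by \eqref{eq:compatibility1}), equation \eqref{eq:gammadotprime} gives
\[
\dot\gamma^{\prime} = (a^{\prime}-b\kappa)\,T + (b^{\prime}+a\kappa-c\tau)\,N + (c^{\prime}+b\tau)\,B.
\]
For the right-hand side I would use $\gamma^{\prime}=T$ together with $\dot\gamma=aT+bN+cB$ and the cross-product rules $T\times N=B$, $T\times B=-N$, $T\times T=0$, obtaining $\gamma^{\prime}\times\dot\gamma = -c\,N + b\,B$. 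Matching the $T$, $N$, $B$ components of the two sides yields exactly the three equations in \eqref{eq:LRcond2}; the first of them is also consistent with the regularity normalization, as just noted.

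Next I would read off the simplified frame matrices. From \eqref{eq:explicitm} and \eqref{eq:LRcond2}, $m_{21}=b^{\prime}+a\kappa-c\tau=-c$ and $m_{31}=c^{\prime}+b\tau=b$. Plugging $m_{21}=-c$, $m_{31}=b$ into \eqref{eq:LM} gives
\[
\mathcal M = \frac12\begin{pmatrix}\frac{i}{\kappa}\bigl(m_{31}^{\prime}+m_{21}\tau\bigr) & -m_{21}-i m_{31}\\ m_{21}-i m_{31} & -\frac{i}{\kappa}\bigl(m_{31}^{\prime}+m_{21}\tau\bigr)\end{pmatrix} = \frac12\begin{pmatrix}\frac{i}{\kappa}(b^{\prime}-c\tau) & c-ib\\ -c-ib & -\frac{i}{\kappa}(b^{\prime}-c\tau)\end{pmatrix},
\]
which is \eqref{eq:LM2}; here I use $|\gamma^{\prime}|=1$ so the $m_{31}^{\prime}/|\gamma^{\prime}|$ term is just $b^{\prime}$, and $\mathcal L$ is immediate from \eqref{eq:LM} with $|\gamma^{\prime}|=1$.

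Finally, for the compatibility conditions \eqref{eq:LRcomp} I would specialize \eqref{eq:compatibility2} and \eqref{eq:compatibility3} (with $\ell=1$, $\dot\ell=0$) using \eqref{eq:LRcond2}. Equation \eqref{eq:compatibility2} reads $\dot\kappa = m_{21}^{\prime} - m_{31}\tau = (-c)^{\prime} - b\tau = -(c^{\prime}+b\tau) = -b$ by the third relation in \eqref{eq:LRcond2}, giving $\dot\kappa=-b$. For \eqref{eq:compatibility3}, the bracketed quantity is $\bigl(m_{31}^{\prime}/\kappa\bigr) + m_{21}(\tau/\kappa) = (b^{\prime}-c\tau)/\kappa = \mathcal M_{11}\cdot(2/i)$, and the equation becomes $\dot\tau = m_{31}\kappa + \bigl\{(b^{\prime}-c\tau)/\kappa\bigr\}^{\prime} = b\kappa + \bigl\{(b^{\prime}-c\tau)/\kappa\bigr\}^{\prime}$; I then need to check this equals $-(c/\kappa)^{\prime}$. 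Differentiating the third equation of \eqref{eq:LRcond2}, $c^{\prime}=b-b\tau$, so $b^{\prime}-c\tau$ and $-c$ should differ appropriately after division by $\kappa$ — the identity $\bigl\{(b^{\prime}-c\tau)/\kappa\bigr\}^{\prime} + b\kappa = -(c/\kappa)^{\prime}$ is where the second relation $b^{\prime}+a\kappa-c\tau=-c$ (i.e. $b^{\prime}-c\tau = -c-a\kappa$) and the first relation $a^{\prime}=b\kappa$ both get used. This bookkeeping — verifying that the two expressions for $\dot\tau$ coincide after using all three equations of \eqref{eq:LRcond2} and one differentiation — is the only nonroutine step, and I expect it to be the main (mild) obstacle; everything else is direct substitution. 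Alternatively, one may bypass \eqref{eq:compatibility3} entirely by computing the $(2,1)$-entry of $[\mathcal L,\mathcal M]+\mathcal M^{\prime}-\dot{\mathcal L}=0$ directly from \eqref{eq:LM2}, which produces $\dot\tau = -(c/\kappa)^{\prime}$ after the same use of \eqref{eq:LRcond2}.
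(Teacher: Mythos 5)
Your proposal is correct and follows essentially the same route as the paper: expand $\dot\gamma^{\prime}$ and $\gamma^{\prime}\times\dot\gamma$ in the Frenet frame to obtain \eqref{eq:LRcond2}, substitute $m_{21}=-c$, $m_{31}=b$ into \eqref{eq:LM}, and reduce the compatibility equations to \eqref{eq:LRcomp}. The one identity you left as ``expected'', namely $b\kappa+\bigl\{(b^{\prime}-c\tau)/\kappa\bigr\}^{\prime}=-(c/\kappa)^{\prime}$, does close exactly as you indicate, using $b^{\prime}-c\tau=-c-a\kappa$ and $a^{\prime}=b\kappa$.
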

\begin{proof}
Recalling the general computational result \eqref{eq:gammadotprime} on the evolution of a space curve, we see that the defining equation of the Lund-Regge evolution \eqref{eq:Lund-Regge} is equivalent to the equation
  \begin{align}\label{eq:LR}
       (a^{\prime} - b |\gamma^{\prime}| \kappa) T + (b^{\prime} + a |\gamma^{\prime}| \kappa - c |\gamma^{\prime}| (\tau - 1)) N + (c^{\prime} + b |\gamma^{\prime}| (\tau - 1)) B = 0.
  \end{align}
  Therefore, the equations \eqref{eq:LRcond2} describe the Lund-Regge evolution \eqref{eq:Lund-Regge} with $|\gamma^{\prime}| = 1$. By using \eqref{eq:LRcond2}, the equations in \eqref{eq:explicitm} are simplified as
  \begin{equation}
    m_{21} = -c, \quad    
    m_{31} = b.
   \end{equation}
   Therefore, the matrices $\mathcal L$ and $\mathcal M$ in the evolution of the Frenet frame $\mathcal F$ in \eqref{eq:SU2Frame} can be simplified to \eqref{eq:LM2}. The compatibility condition between \eqref{eq:LM2} is \eqref{eq:LRcomp}.
\end{proof}
 From \eqref{eq:LRcomp} and the first equation in \eqref{eq:LRcond2}, the coefficient functions $a, b, c$ of the Lund-Regge evolution $\gamma$ can be represented by its curvature $\kappa$ and torsion $\tau$ as
 \begin{equation}\label{eq:abcPLR}
 a = -\int^s \dot \kappa \kappa \, ds 
\quad 
 b = -  \dot \kappa, \quad 
 c = - \kappa \int^s \dot \tau \, ds.
 \end{equation}
The second and third equations in \eqref{eq:LRcond2} respectively 
becomes  
\begin{gather}\label{eq:kappatauPLR1}
\frac{\dot \kappa ^{\prime}}{\kappa} - (\tau - 1)\int^s \dot \tau ds
+ \int^s \kappa \dot \kappa ds = 0,\\
\left(\kappa \int^s \dot \tau ds\right)^{\prime} + \dot \kappa (\tau - 1) = 0.
\label{eq:kappatauPLR2}
\end{gather}
We now summarize the discussion to this point as the following theorem.
\begin{theorem}\label{thm:PLRevolution}
Regarding the Lund-Regge evolution, we have the following$:$
\begin{enumerate}
\item[{\rm (1)}]
Let $\gamma$ be the Lund-Regge evolution such that $|\gamma'|=1$. Then it satisfies \eqref{eq:gammadot} with the coefficient functions \eqref{eq:abcPLR}, and its Frenet frame $\mathcal{F}$ evolves according to \eqref{eq:SU2Frame}
with the matrices
\begin{align}\label{eq:LMrealfinal}
\mathcal L = 
\frac{1} 2
\begin{pmatrix} 
i \tau& -\kappa\\
\kappa   &-i \tau
\end{pmatrix}, \quad 
\mathcal M
= 
\frac12
\begin{pmatrix} 
\frac{i}{\kappa} \left(m_{31}^{\prime} + m_{21} \tau \right) & -m_{21} - i m_{31}  \\
m_{21} - i m_{31}   & 
-\frac{i}{\kappa} \left(m_{31}^{\prime} + m_{21} \tau \right) 
\end{pmatrix},
\end{align}
where $m_{21}$ and $m_{31}$ are explicitly given by
\begin{align}\label{eq:kappataum}
 m_{21} =\kappa\int^s \dot \tau ds, \quad  m_{31} = {-\dot\kappa}
 \end{align}
 Moreover, the curvature $\kappa$ and torsion $\tau$ of $\gamma$ satisfy the pair of 
 {\rm PDEs} \eqref{eq:kappatauPLR1} and \eqref{eq:kappatauPLR2}, which is the compatibility condition between \eqref{eq:LMrealfinal}.

\item[{\rm (2)}]
 Conversely, let $\kappa$ and $\tau$ be solutions to the system \eqref{eq:kappatauPLR1} and \eqref{eq:kappatauPLR2} and let $\gamma$ be the evolution determined by \eqref{eq:gammadot} and \eqref{eq:abcPLR}, where 
 the integration and derivative are taken with respect to the length element.
 Then $|\gamma'|$ does not depend on $t$ and thus $\gamma$ gives an 
 isoperimetric curve flow.
 Moreover, by choosing the arc-length parametrization $|\gamma'|=1$ of $\gamma$, the curve $\gamma$ 
 satisfies the Lund-Regge evolution $\dot \gamma^{\prime} = \gamma^{\prime} \times \dot \gamma$
 with unit-speed.
\end{enumerate}
\end{theorem}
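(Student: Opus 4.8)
The plan is to assemble Part (1) entirely from results already in hand and to obtain Part (2) by running the Proposition-level computations in reverse. For Part (1): given a Lund-Regge evolution $\gamma$ with $|\gamma'|=1$, the preceding Proposition gives that \eqref{eq:Lund-Regge} is equivalent to the system \eqref{eq:LRcond2} and that the Frenet frame $\mathcal F$ satisfies \eqref{eq:SU2Frame} with $\mathcal L,\mathcal M$ as in \eqref{eq:LM2} and compatibility \eqref{eq:LRcomp}. I would then combine the first equation of \eqref{eq:LRcond2}, namely $a'=b\kappa$, with \eqref{eq:LRcomp} to solve for $a,b,c$ in terms of $\kappa,\tau$: since $b=-\dot\kappa$ and $\dot\tau=-(c/\kappa)'$ we get $c=-\kappa\int^s\dot\tau\,ds$ and $a=-\int^s\dot\kappa\,\kappa\,ds$ (up to the choice of lower limit of integration, which only shifts by a function of $t$ alone — I should remark that this freedom is harmless and corresponds to a reparametrization/translation ambiguity). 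These are exactly \eqref{eq:abcPLR}. Substituting $m_{21}=-c=\kappa\int^s\dot\tau\,ds$ and $m_{31}=b=-\dot\kappa$ into \eqref{eq:LM2} (or directly into the general \eqref{eq:LM}) yields \eqref{eq:LMrealfinal} with \eqref{eq:kappataum}. Finally, feeding these expressions for $m_{21},m_{31}$ into the remaining two equations of \eqref{eq:LRcond2} gives \eqref{eq:kappatauPLR1} and \eqref{eq:kappatauPLR2}; alternatively, one notes these are precisely the compatibility equations \eqref{eq:compatibility2}, \eqref{eq:compatibility3} (equivalently the $(1,1)$- and $(2,1)$-entries of \eqref{eq:su2generalLMcomp}) written out with the substituted $m$'s, so nothing new needs to be verified.

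For Part (2): start from functions $\kappa,\tau$ solving \eqref{eq:kappatauPLR1}–\eqref{eq:kappatauPLR2}, define $a,b,c$ by \eqref{eq:abcPLR}, and invoke the converse half of Proposition \ref{prp:curve}. To apply it I must check that the triple $(\ell,\kappa,\tau)$ with $\ell=1$ together with $(a,b,c)$ satisfies the three compatibility equations \eqref{eq:compatibility1}, \eqref{eq:compatibility2}, \eqref{eq:compatibility3}. Equation \eqref{eq:compatibility1} with $\ell=1$ reads $0=a'-b\kappa$, which holds by construction since $a'=-\dot\kappa\kappa=b\kappa$. Equations \eqref{eq:compatibility2} and \eqref{eq:compatibility3} become, after substituting $m_{21}=-c$, $m_{31}=b$ and the formulas for $a,b,c$, exactly \eqref{eq:kappatauPLR1} and \eqref{eq:kappatauPLR2}, which we assumed. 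Hence Proposition \ref{prp:curve} produces an evolution $\gamma$ realizing these data. That $|\gamma'|$ is $t$-independent is immediate from \eqref{eq:compatibility1} with the computed $a,b$ — actually, more directly, one shows $|\gamma'|^{\boldsymbol\cdot}=0$ so the flow is isoperimetric; reparametrizing to unit speed, $|\gamma'|\equiv 1$, we recover the setting of the Proposition, where \eqref{eq:LRcond2} holds, and \eqref{eq:LR} then shows $\dot\gamma'=\gamma'\times\dot\gamma$, i.e. $\gamma$ is a unit-speed Lund-Regge evolution.

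The genuinely delicate point — really the only one — is the bookkeeping around the indefinite integrals $\int^s$ in \eqref{eq:abcPLR}: the reconstruction in Proposition \ref{prp:curve} fixes $a,b,c$ as honest functions of two variables, whereas \eqref{eq:abcPLR} determines them only up to additive functions of $t$ (for $a$) and a multiplicative-by-$\kappa$ function of $t$ (for $c$). I would address this by fixing, once and for all, a lower limit $s_0$ for every $\int^s$, observing that different choices change $\gamma$ only by an ambient rigid motion depending on $t$ and a reparametrization, none of which affects $\kappa,\tau,\ell$ or the validity of \eqref{eq:Lund-Regge}. With that convention pinned down, the equivalences above are exact and the two parts of the theorem follow. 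I do not anticipate needing any new identity beyond \eqref{eq:gammadotprime}, \eqref{eq:LR}, \eqref{eq:LRcond2}, \eqref{eq:LRcomp} and Proposition \ref{prp:curve}; the proof is a matter of organizing these.
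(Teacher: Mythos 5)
Your proposal is correct and takes essentially the same route as the paper: part (1) is assembled from Proposition \ref{prp:curve}, the proposition of Section \ref{sbsc:LRevo}, and the derivation of \eqref{eq:abcPLR}, \eqref{eq:kappatauPLR1}--\eqref{eq:kappatauPLR2}, while part (2) reconstructs $\gamma$ from the converse of Proposition \ref{prp:curve} and then verifies that $|\gamma^{\prime}|$ is $t$-independent and that \eqref{eq:LR} holds, exactly as the paper does (the paper checks the isoperimetric property via $\langle \dot\gamma^{\prime},\gamma^{\prime}\rangle=0$, which matches your use of \eqref{eq:compatibility1}). One harmless imprecision: after substituting \eqref{eq:abcPLR}, the conditions \eqref{eq:compatibility2}--\eqref{eq:compatibility3} are not literally identical to \eqref{eq:kappatauPLR1}--\eqref{eq:kappatauPLR2} but are consequences of them (through \eqref{eq:LRcond2}), which is the only implication your argument actually needs.
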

\begin{proof}
The first statement is a consequence of Proposition \ref{prp:curve}. Let 
us consider the second statement. By the Frenet-Serre equation in 
\eqref{eq:Frenet-serre} and \eqref{eq:abcPLR}, we have 
\[ 
\langle \dot \gamma', \gamma'\rangle =  |\gamma'|^2 
\left(\frac{a'}{|\gamma'|} - b \kappa\right)=0.
\]
 Therefore, the length element $|\gamma'|$ does not depend on $t$, and 
 without loss of generality we can assume $|\gamma'| = 1$ for all $t$.
 Then the Lund-Regge condition \eqref{eq:LR} can be satisfied by 
 the choices of $a, b$ and $c$ in \eqref{eq:abcPLR} and 
 the equations in \eqref{eq:kappatauPLR1} and \eqref{eq:kappatauPLR2}.
\end{proof}

 
 Note that $\mathcal M$ in \eqref{eq:LMrealfinal} can be explicitly 
 represented by
 \begin{equation}\label{eq:LMrealfinal2}
\mathcal M
= 
\frac12
\begin{pmatrix} 
i\left(-\frac{\dot \kappa^{\prime}}{\kappa} + \tau \int^s \dot \tau ds\right) 
&  - \kappa \int^s \dot \tau ds + i \dot \kappa \\
 \kappa \int^s \dot \tau ds + i \dot \kappa & 
-i\left(-\frac{\dot \kappa^{\prime}}{\kappa} + \tau \int^s \dot \tau ds\right) 
\end{pmatrix}.
\end{equation}
Finally, we arrive at a straightforwardformulation of the Lund-Regge evolution as follows:
\begin{corollary}\label{coro:PLRevolution} 
 By choosing the diagonal gauge $\mathcal D$ taking values in 
 $\operatorname{U}(1) = \{\diag (e^{i p}, e^{-ip}) \mid p\in \R\}$ as in 
 \eqref{eq:gaugeD} and 
 introducing the complex-valued function 
\begin{equation}\label{eq:q}
q =  \kappa \exp \left( i \int^s (\tau - 1) ds\right),
\end{equation}
 the evolution of the gauged Frenet frame $F = \mathcal F \mathcal D$ with $\mathcal L$ in \eqref{eq:LMrealfinal} 
 and $\mathcal M$ in \eqref{eq:LMrealfinal2}
 can be rephrased as
\begin{equation}\label{eq:Laxpair}
\begin{split}
F^{\prime} =\;& F L,\quad
L = 
\frac{1}2
\begin{pmatrix} 
i & q\\
-\bar{q}   &-i  
\end{pmatrix},\\
\dot F =\;& F M,\quad
M = 
\frac{i} {2}
\begin{pmatrix} 
-\Re \left( \dot q^{\prime}/q\right)& 
  -\dot q  \\
  -\dot {\bar{q}}   &  \Re \left( \dot q^{\prime}/q\right)
\end{pmatrix}.
\end{split}
\end{equation}
 The compatibility condition of the above system can be computed as
$\left\{\Re \left( \dot q^{\prime}/q\right)\right\}^{\prime} = - \frac12 \left(|q|^2\right)^{\boldsymbol{\cdot}},
$ and $\Im \left(\dot q^{\prime}/q\right)=0$,
 which are equivalent to the following nonlinear PDE:
\begin{equation}\label{eq:qPLR}
 \dot q^{\prime} + \frac12 q \int^s \left(|q|^2\right)^{\boldsymbol \cdot} ds =0.
\end{equation}
\end{corollary}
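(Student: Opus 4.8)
The plan is to produce the gauge matrix $\mathcal D$ explicitly and then verify the two assertions (the new form of the Lax pair, and the compatibility condition) by direct but structured computation. First I would determine the function $p$ defining $\mathcal D = \diag(e^{ip}, e^{-ip})$. The off-diagonal entry of $\mathcal L$ in \eqref{eq:LMrealfinal} is $-\tfrac12 \kappa$, which is real; the target off-diagonal entry in \eqref{eq:Laxpair} is $\tfrac12 q = \tfrac12\kappa\exp(i\int^s(\tau-1)\,ds)$. Under the gauge transformation $F = \mathcal F\mathcal D$, the connection transforms as $L = \mathcal D^{-1}\mathcal L\mathcal D + \mathcal D^{-1}\mathcal D'$, so the off-diagonal $(1,2)$-entry of $\mathcal L$ gets multiplied by $e^{-2ip}$ while the diagonal picks up $\mathcal D^{-1}\mathcal D' = \diag(ip', -ip')$. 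Matching the diagonal $(1,1)$-entry $\tfrac i2\tau + ip' = \tfrac i2$ forces $p' = \tfrac12(1-\tau)$, i.e.\ $p = -\tfrac12\int^s(\tau-1)\,ds$ up to a constant; and matching the off-diagonal then amounts to checking $-\tfrac12\kappa\, e^{2ip} \cdot(-1) = \tfrac12 q$ — wait, one must track the sign convention of \eqref{eq:identification} carefully, but the upshot is that $e^{-2ip}$ supplies exactly the phase $\exp(i\int^s(\tau-1)\,ds)$ appearing in $q$. This fixes $\mathcal D$; I would state the definition as equation \eqref{eq:gaugeD}.

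Next I would carry out the time-part transformation $M = \mathcal D^{-1}\mathcal M\mathcal D + \mathcal D^{-1}\dot{\mathcal D}$, using $\mathcal M$ in the explicit form \eqref{eq:LMrealfinal2}. The diagonal contribution $\mathcal D^{-1}\dot{\mathcal D} = \diag(i\dot p, -i\dot p)$ with $\dot p = -\tfrac12\int^s\dot\tau\,ds$ must combine with the diagonal entry $\tfrac i2(-\dot\kappa'/\kappa + \tau\int^s\dot\tau\,ds)$ of $\mathcal M$ to yield $-\tfrac i2\Re(\dot q'/q)$. The key computational identity here is to expand $\dot q'/q$ from the definition \eqref{eq:q}: since $\log q = \log\kappa + i\int^s(\tau-1)\,ds$, one gets $q'/q = \kappa'/\kappa + i(\tau-1)$, and then $\dot q/q = \dot\kappa/\kappa + i\int^s\dot\tau\,ds$, and differentiating in $s$, $\dot q'/q = (\dot q/q)' + (\dot q/q)(q'/q) = \cdots$; collecting real parts should reproduce exactly $-\dot\kappa'/\kappa + \tau\int^s\dot\tau\,ds$ plus the $\dot p$ correction $-\int^s\dot\tau\,ds$, i.e.\ the real part works out to match. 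For the off-diagonal entry, $\mathcal M_{12} = \tfrac12(-\kappa\int^s\dot\tau\,ds + i\dot\kappa)$ gets multiplied by $e^{-2ip}$, and I expect this to equal $-\tfrac i2\dot q$; again this follows from $\dot q = q(\dot\kappa/\kappa + i\int^s\dot\tau\,ds) = e^{i\int^s(\tau-1)}(\dot\kappa + i\kappa\int^s\dot\tau\,ds)$, so $-\tfrac i2\dot q = e^{i\int^s(\tau-1)}\cdot\tfrac12(-i\dot\kappa + \kappa\int^s\dot\tau\,ds)\cdot(-1)$... the bookkeeping of signs and of the phase $e^{\pm 2ip}$ versus $e^{\pm i\int^s(\tau-1)}$ is exactly where care is needed, but structurally the match is forced.

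Finally, for the compatibility condition, I would compute $[L,M] + M' - \dot L = 0$ with $L, M$ as in \eqref{eq:Laxpair}; since $\dot L = \tfrac12\diag(0,0)$ off the $q$-entries, namely $\dot L_{12} = \tfrac12\dot q$ and $\dot L_{21} = -\tfrac12\dot{\bar q}$, the $(1,2)$-entry of the compatibility equation reads off an identity in $\dot q$ and its derivatives, and the $(1,1)$-entry reads off a scalar identity. Writing $\rho := \Re(\dot q'/q)$, the diagonal equation becomes $\rho' = -\tfrac12(|q|^2)^{\boldsymbol\cdot}$ (using $|q|^2 = q\bar q$ and $(q\bar q)^{\boldsymbol\cdot} = \dot q\bar q + q\dot{\bar q} = 2\Re(\dot q\bar q)$, together with the off-diagonal products in $[L,M]$), while the off-diagonal equation, after dividing by $q$, becomes $(\dot q'/q) = $ (something real), forcing $\Im(\dot q'/q) = 0$; integrating $\rho' = -\tfrac12(|q|^2)^{\boldsymbol\cdot}$ in $s$ gives $\rho = -\tfrac12\int^s(|q|^2)^{\boldsymbol\cdot}\,ds$, and substituting back into $\Re(\dot q'/q) = \rho$, i.e.\ $\dot q'/q = \rho$ (real part equals $\rho$, imaginary part zero, so the whole quotient equals $\rho$), yields $\dot q' = \rho\, q = -\tfrac12 q\int^s(|q|^2)^{\boldsymbol\cdot}\,ds$, which is \eqref{eq:qPLR}. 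The main obstacle I anticipate is not conceptual but notational: keeping the $\mathfrak{su}(2)$ sign conventions from \eqref{eq:identification} consistent through two successive gauge transformations, and making sure the "integration constant" ambiguities in $p$ and in the various $\int^s$ terms are absorbed consistently (they only shift $q$ by a constant phase and $\rho$ by nothing observable). I would handle this by fixing all integration constants at $s = 0$ at the outset and checking one entry of each matrix equation carefully, then invoking the $\mathfrak{su}(2)$-structure to get the remaining entries for free.
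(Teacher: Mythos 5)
Your proposal is correct and follows essentially the same route as the paper: determine the diagonal gauge by matching entries (the paper's \eqref{eq:gaugeD} is exactly your $p=-\tfrac12\int^s(\tau-1)\,ds$ up to the constant factor $\diag(i,-i)$, which settles the sign issue you flagged so that $L_{12}=+q/2$), transform $\mathcal M$ and compare with the explicit expansions of $\dot q$ and $\dot q^{\prime}/q$, then read off the two scalar conditions from the zero-curvature equation and integrate in $s$ to obtain \eqref{eq:qPLR}. The only loose end in your write-up, the phase/sign bookkeeping, is resolved precisely by that constant factor (or equivalently by absorbing a constant $\pi$ into the integration constant of $\int^s(\tau-1)\,ds$), so no substantive gap remains.
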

\begin{proof}
 Let 
\begin{equation}\label{eq:gaugeD}
\mathcal D = \begin{pmatrix} i \exp (-\frac{i}2  \int^s (\tau - 1) ds) & 0 \\ 0 & -i\exp (\frac{i}2  
\int^s (\tau - 1) ds)  \end{pmatrix}.
\end{equation} The matrices $\mathcal L$ in \eqref{eq:LMrealfinal} and $\mathcal M$ in \eqref{eq:LMrealfinal2} are computed as $L = \mathcal D^{-1} \mathcal L \mathcal D + \mathcal D^{-1} \mathcal D^{\prime}$ and $M = \mathcal D^{-1} \mathcal M \mathcal D + \mathcal D^{-1} \dot {\mathcal D}$, respectively. The matrices $L$ and $M$ are explicitly given by
\begin{align}\label{eq:kappatauL}
        L &=   \frac{1}2
          \begin{pmatrix} 
            i &  \kappa e^{i\int^s (\tau - 1)ds}\\
            - \kappa e^{-i\int^s  (\tau - 1) ds}   &-i  
          \end{pmatrix},\\
    \label{eq:kappatauM}
        M &= \frac{i}{2}
          \begin{pmatrix}
              -\frac{\dot \kappa^{\prime}}{\kappa} + (\tau - 1)\int \dot \tau ds  & -( \dot \kappa + i \kappa \int \dot \tau ds) e^{i\int (\tau - 1) ds} \\
              -( \dot \kappa - i  \kappa \int \dot \tau ds) e^{i\int (\tau - 1) ds}& -\left(-\frac{\dot \kappa^{\prime}}{\kappa} + (\tau - 1)\int \dot \tau ds\right)
          \end{pmatrix}.
\end{align}
Next, it can be verified that $\dot q$ and $\dot q^{\prime} / q$ can be computed as follows:
\begin{gather}
    \dot q = \left( \dot \kappa + i \kappa \int^s \dot \tau ds\right) e^{i\int (\tau - 1) ds}, \label{eq:qdot}\\
    \frac{\dot q^{\prime}}{q} = \left(\frac{\dot \kappa^{\prime}}{\kappa} -(\tau - 1)\int^s \dot \tau ds+ \frac{i }{\kappa} (\kappa \dot \tau + \dot \kappa (\tau - 1) +\kappa^{\prime}\int^s \dot \tau ds) \right).\label{eq:qdotprime}
\end{gather}
We now obtain \eqref{eq:Laxpair} by substituting \eqref{eq:q}, \eqref{eq:qdot}, and \eqref{eq:qdotprime}
into each component of matrices $L$ and $M$ in \eqref{eq:kappatauL} and \eqref{eq:kappatauM}, respectively.
The compatibility conditions of the matrices $L$ and $M$ in \eqref{eq:Laxpair} are equivalent to  
\[
\left\{\Re \left( \dot q^{\prime}/q\right)\right\}^{\prime} = - \frac12 \left(|q|^2\right)^{\boldsymbol{\cdot}},\quad
\Im \left(\dot q^{\prime}/q\right)=0
\]
by a similar computation in \eqref{eq:su2generalLMcomp}.
We finally obtain \eqref{eq:qPLR} by integrating both sides of $\left\{\Re \left( \dot q^{\prime}/q\right)\right\}^{\prime} = - \frac12 \left(|q|^2\right)^{\boldsymbol{\cdot}}$ with respect to the curve parameter \textit{s} and using $\Im \left(\dot q^{\prime}/q\right)=0$.
\end{proof}

\begin{corollary}\label{coro:Laxpair}
The complex-valued function $q$ in \eqref{eq:q} can be identified with the complex-valued function $a$ in \eqref{eq:PLRaomega}, that is, $q$ gives a solution of the Pohlmeyer-Lund-Regge equation.
 Thus the gauged Frenet frame $F$ in \eqref{eq:Laxpair} is 
 the matrix-valued wave function $\Psi$ in \eqref{eq:DateLaxpair} with $\lambda =1$ 
 up to multiplication of some constant matrix by left, 
  that is, $F_0 F = \Psi$ for some $F_0 \in  \operatorname{SU}(2)$. 
 Moreover, there exists a family $\{F^{\lambda}\}_{\lambda >0}$ 
 such that $F^{\lambda}|_{\lambda =1} = F_0 F$ and $F^{\lambda}$ satisfies 
 the following system of PDEs:
 \begin{equation}\label{eq:lambdaLaxpair}
\begin{split}
(F^{\lambda})^{\prime} =\;& F^{\lambda} L^{\lambda},\quad
L^{\lambda} = 
\frac{1}2
\begin{pmatrix} 
i\lambda & q\\
-\bar{q}   &-i\lambda  
\end{pmatrix},\\
(F^{\lambda})^{\boldsymbol{\cdot}} =\;& F^{\lambda} M^{\lambda},\quad
 M^{\lambda}
= 
\frac{i} {2\lambda}
\begin{pmatrix} 
-\Re \left( \dot q^{\prime}/q\right)& 
  -\dot q  \\
  -\dot {\bar{q}}   &  \Re \left( \dot q^{\prime}/q\right)
\end{pmatrix}.
\end{split}
\end{equation}
\end{corollary}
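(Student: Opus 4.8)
The plan is to establish the three assertions of Corollary~\ref{coro:Laxpair} in sequence, each reducing to a direct comparison of matrix entries with objects already constructed. First, to identify $q$ with $a$: comparing the $L$ in \eqref{eq:Laxpair} with the first equation of the Lax pair \eqref{eq:DateLaxpair} at $\lambda=1$, we see $L$ has exactly the shape $\tfrac12\begin{pmatrix} i & a \\ -\bar a & -i\end{pmatrix}$ with $a=q$. So I would first verify that the $(1,2)$-entry $q=\kappa\exp(i\int^s(\tau-1)\,ds)$ coincides with Date's $a = -\{\exp(i\omega)\sin u\}'/\cos u$ under the dictionary \eqref{eq:Nsolitonsolution}, or more economically, argue structurally: since $q$ satisfies the single PDE \eqref{eq:qPLR} (by Corollary~\ref{coro:PLRevolution}) and this is precisely the equation Date's $a$ satisfies, and since the correspondence $a\leftrightarrow(u,v)$ in \eqref{eq:PLRaomega} is (locally, under $0<u<\pi/2$) a bijection between complex-valued $a$ and pairs $(u,v)$, the function $q$ determines a solution $(u,v)$ of \eqref{eq:PLR}. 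Concretely, one reads off $u$ from $|q| = \kappa = \sin u / \cos^2(u/2)$ (or whatever normalization the comparison forces) and $v$ from $\arg q = \int^s(\tau-1)\,ds$ via $\omega$; the compatibility PDE \eqref{eq:qPLR} then becomes \eqref{eq:PLR}.

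Second, to get $F_0F=\Psi$: both $F$ (from \eqref{eq:Laxpair}) and $\Psi|_{\lambda=1}$ (from \eqref{eq:DateLaxpair}) satisfy the same pair of linear ODEs $X' = XL$, $\dot X = XM$ with the same coefficient matrices — the $M$ in \eqref{eq:Laxpair} must be checked to agree with $\frac{i}{2}\begin{pmatrix}\cos u & -e^{i\omega}\sin u\\ -e^{-i\omega}\sin u & -\cos u\end{pmatrix}$ at $\lambda=1$, which follows once one expresses $\Re(\dot q'/q)$, $\dot q$ in terms of $u,\omega$ using \eqref{eq:qdot}, \eqref{eq:qdotprime} and \eqref{eq:PLRaomega}. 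By uniqueness of solutions to a compatible linear system with prescribed initial value, $\Psi = \Psi(0,0)F$, and $F_0 := \Psi(0,0) \in \operatorname{SU}(2)$ since $\Psi$ is built from $\operatorname{SU}(2)$-data (the $\mathfrak{su}(2)$ structure of the Lax pair and the reality conditions on $\Psi_{12},\Psi_{22}$).

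Third, the $\lambda$-family: I would simply define $L^\lambda$, $M^\lambda$ by the stated formulas \eqref{eq:lambdaLaxpair} and observe their compatibility condition is the same PDE \eqref{eq:qPLR}, which is $\lambda$-independent — indeed $[L^\lambda, M^\lambda] + (M^\lambda)' - \dot L^\lambda$ has its $\lambda$-dependence factor out, a standard feature of Lax pairs with this $\lambda$-structure. Hence for each $\lambda>0$ there is a solution $F^\lambda$, and one fixes the integration constant so that $F^\lambda|_{\lambda=1} = F_0 F$; this is a free choice since at $\lambda=1$ the system is exactly \eqref{eq:Laxpair} conjugated (or identical, after matching $M$). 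One should note $L^\lambda, M^\lambda$ take values in $\mathfrak{su}(2)$ for $\lambda \in \R$, so $F^\lambda$ can be taken in $\operatorname{SU}(2)$.

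The main obstacle is the bookkeeping in the first step: carefully matching normalizations between our $(\kappa,\tau)$-parametrization and Date's $(u,v)$-parametrization, in particular confirming that the specific combination $\Re(\dot q'/q)$ appearing in $M$ equals Date's $\cos u$ after the substitution, and that the phase conventions in $\arg q$ versus $\omega$ are consistent. Once the dictionary $|q| \leftrightarrow u$, $\arg q \leftrightarrow \omega \leftrightarrow v$ is pinned down, everything else is uniqueness of ODE solutions and a $\lambda$-scaling observation.
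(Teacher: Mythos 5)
Your route is essentially the paper's: identify $q$ in \eqref{eq:q} with Date's $a$, verify via \eqref{eq:PLRaomega} the two entry identities $-\cos u=\Re\left(\dot q^{\prime}/q\right)$ and $e^{i\omega}\sin u=\dot q$, and then compare \eqref{eq:lambdaLaxpair} entrywise with \eqref{eq:DateLaxpair}, the constant factor $F_0$ and the family $\{F^{\lambda}\}_{\lambda>0}$ coming from existence and uniqueness for the compatible linear system (the paper simply takes $F^{\lambda}$ to be the wave function $\Psi$ itself). Two small corrections: the dictionary is exactly the pair of identities you list in your second step, not $|q|=\sin u/\cos^{2}(u/2)$, which is not the right normalization (and the identification should not be routed through the soliton-specific formula \eqref{eq:Nsolitonsolution}); and your ``structural'' shortcut is circular as written, since the claim that Date's $a$ satisfies \eqref{eq:qPLR} --- equivalently, the equivalence of \eqref{eq:PLR} and \eqref{eq:qPLR} noted in Remark \ref{rem} \ref{rem:equivalency} --- is precisely what this corollary establishes, so the concrete verification must be carried out rather than appealed to.
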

\begin{proof}
Note that we set 
\begin{equation*}
    q = a = -\frac{\{ \exp (i \omega) \sin u \}^{\prime}}{\cos u}.
\end{equation*}
A straightforward computation using \eqref{eq:PLRaomega} shows that 
\[
- \cos u = \Re \left(\frac{\dot q^{\prime}}{q}\right),
\quad 
\exp (i \omega ) \sin u = \dot q.
\]
Therefore, we can show the claim by comparing the right-hand side of the first equation in \eqref{eq:DateLaxpair} with $L^{\lambda}$ in \eqref{eq:lambdaLaxpair} and the right-hand side of the second equation in \eqref{eq:DateLaxpair} with $M^{\lambda}$ in \eqref{eq:lambdaLaxpair}, respectively.
\end{proof}

\begin{remark}\label{rem}
\mbox{}
\begin{enumerate}
\renewcommand{\theenumi}{\rm (\arabic{enumi})}
\renewcommand{\labelenumi}{\theenumi}
\item The integrals $\int^s \left(|q|^2\right)^{\boldsymbol \cdot}  ds$ and $\int^s (\tau - 1) ds$ represent the indefinite integrals of $\left(|q|^2\right)^{\boldsymbol \cdot}$ and $\tau - 1$ with respect to the curve parameter $s$, allowing arbitrary functions of the evolution parameter $t$ as integration constants.

 \item \label{rem:sG} If the complex-valued function $q$ reduces to the real-valued function, that is, the torsion $\tau$ is identically $1$, then \eqref{eq:qPLR} becomes the {\rm sine-Gordon} equation, see \cite{NSW}$:$ let us define $u = \int^s \kappa ds$ and $f(s, t)$ by $\dot \kappa = \sin u + f$, thus $\dot u^{\prime} = \sin u + f$. Then \eqref{eq:qPLR} becomes $f^{\prime} + u^{\prime} \int^s u^{\prime} f ds =0$, and it can be written as 
 \[
\frac{d f}{d u} + \int^{u} f d u=0.
 \]
 The general solution is given by $f = A \cos u + B \sin u$ and one obtains $\dot u^{\prime} = C \sin (u + u_0)$, and the claim follows. Moreover, the $L$ and $M$ in \eqref{eq:Laxpair} can be rephrased as
 \begin{align}\label{eq:LMsinegordon}
 L = 
\frac12
\begin{pmatrix} 
i& u^{\prime}\\
-u^{\prime}  &-i 
\end{pmatrix},\quad 
 M
= \frac{i} {2}
\begin{pmatrix} 
\cos u  &  - \sin u   \\
-\sin u    & 
- \cos u 
\end{pmatrix},
\end{align}
since $\dot \kappa = \dot u^{\prime} = \sin u$ holds, which have been known the Lax pair for the sine-Gordon equation \cite{AKNS} under spectral parameter is equal to $1$. 
    \item  It is well known that 
    the nonlinear Schr\"{o}dinger equation  $i \dot q  + q^{\prime \prime} + 2 |q|^2 q =0$
     can be obtained by the evolution of $\dot \gamma = \gamma^{\prime\prime}  \times \gamma^{\prime}$, see \cite{Hashimoto}. It is an analogue that the complex nonlinear equation $\dot q^{\prime}  + \frac12 q \int^s (|q|^2)^{\boldsymbol{\cdot}} ds=0$  can be obtained by the Lund-Regge evolution 
      $\dot \gamma^{\prime} = \gamma^{\prime} \times \dot \gamma$. \label{rem:nonlinearschrodinger}
\item The Pohlmeyer-Lund-Regge equation \eqref{eq:PLR} and 
 the nonlinear PDEs \eqref{eq:kappatauPLR1}, \eqref{eq:kappatauPLR2} are equivalent. 
 Moreover, they are equivalent to the single complex nonlinear PDE \eqref{eq:qPLR}.
 We also call the PDE \eqref{eq:qPLR} as the {\rm Pohlmeyer-Lund-Regge equation}.
 This complex equation was also discussed in \cite{FukumotoMiyajima}. \label{rem:equivalency}
\end{enumerate}

\end{remark}

\subsection{The representation formula}\label{sbsc:rep}
 Recall that the Euclidean three-space $\R^3$ can be identified with $\mathfrak{su}(2)$ by 
\begin{equation}\label{eq:identification2}
\begin{pmatrix} 
p & q & r
\end{pmatrix}^{\mathsf{T}}
\in \R^3  \longleftrightarrow 
\frac{1}{2}
\begin{pmatrix} 
i r &  -p - i q \\
p- i q &  -ir 
\end{pmatrix}  
\in \mathfrak{su}(2).
\end{equation}
Here we note that the bracket $[\cdot, \cdot]$ and the trace of the matrix product in $\mathfrak{su}(2)$ can be 
translated to the cross product $\times$ and the inner product $\langle \cdot,\cdot \rangle$ in $\R^3$ as
\begin{equation}\label{eq:bracket}
a \times b = [a, b],\quad
\langle a,b \rangle =   -2\tr{(ab)}. 
\end{equation}
\begin{theorem}[The representation formula]\label{thm:Representation}
 Let $q$ be a solution of \eqref{eq:qPLR} and $F(=F^{\lambda})$ the solution of 
 the Lax pair in \eqref{eq:lambdaLaxpair}.
 Define an $\mathfrak{su}(2)$-valued map 
\begin{equation}\label{eq:Sym}
\gamma = \lambda (\partial_{\lambda}  F) F^{-1} |_{\lambda = 1},
\end{equation}
 where $\partial_{\lambda}= \frac{\partial }{\partial \lambda}$.
 Then  under the identification $\R^3 \cong \mathfrak{su}(2)$, $\gamma$ is the Lund-Regge evolution in \eqref{eq:Lund-Regge} with $|\gamma^{\prime}|=1$.
 Conversely, all Lund-Regge evolutions can be obtained by this way.
\end{theorem}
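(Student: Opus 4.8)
The plan is to verify the Sym formula by direct computation in the $2\times 2$ matrix model, exploiting that $F=F^\lambda$ solves the Lax pair \eqref{eq:lambdaLaxpair}. First I would set $\gamma = \lambda(\partial_\lambda F)F^{-1}|_{\lambda=1}$ and differentiate with respect to $s$. Using $(F^\lambda)' = F^\lambda L^\lambda$ and the identity $\partial_\lambda(F') = (\partial_\lambda F)' $, one computes $\gamma' = \lambda\,(\partial_\lambda F)' F^{-1} - \lambda (\partial_\lambda F) F^{-1} F' F^{-1}$, which after substituting the Lax equations collapses to $\gamma' = \lambda\, F (\partial_\lambda L^\lambda) F^{-1}|_{\lambda=1} = \lambda\, \mathrm{Ad}_F(\partial_\lambda L^\lambda)|_{\lambda=1}$. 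Since $\partial_\lambda L^\lambda = \tfrac12\diag(i,-i)$ is $\lambda$-independent, we get $\gamma' = \mathrm{Ad}_F\!\left(\tfrac12\diag(i,-i)\right)$, a unit vector in $\mathfrak{su}(2)\cong\R^3$ under \eqref{eq:identification2}, so $|\gamma'|=1$ automatically. Likewise $\dot\gamma = \lambda\,\mathrm{Ad}_F(\partial_\lambda M^\lambda)|_{\lambda=1}$, and because $M^\lambda = \tfrac{1}{2\lambda}(\text{const in }\lambda)$ we get $\partial_\lambda M^\lambda = -\tfrac{1}{\lambda}M^\lambda$, so $\lambda\,\partial_\lambda M^\lambda|_{\lambda=1} = -M^{\lambda}|_{\lambda=1} = -M$, giving $\dot\gamma = -\mathrm{Ad}_F(M)$.

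Next I would check the Lund-Regge equation $\dot\gamma' = \gamma'\times\dot\gamma$. Differentiating $\dot\gamma = -\mathrm{Ad}_F(M)$ in $s$ and using $F'=FL$ gives $\dot\gamma' = -\mathrm{Ad}_F(M' + [L,M])$; differentiating $\gamma' = \mathrm{Ad}_F(\partial_\lambda L^\lambda|_{\lambda=1})$ in $t$ gives $\dot\gamma' = \mathrm{Ad}_F(\dot{(\partial_\lambda L^\lambda)}|_{\lambda=1} + [M, \partial_\lambda L^\lambda|_{\lambda=1}]) = \mathrm{Ad}_F([M,\tfrac12\diag(i,-i)])$ since $\partial_\lambda L^\lambda$ is constant in both $s$ and $t$. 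Comparing these two expressions — they agree precisely because the compatibility $M'+[L,M]=\dot L$ for the $\lambda$-family forces the right relation, but more directly I can just use the second expression. Then, translating the bracket to the cross product via \eqref{eq:bracket}, $\gamma'\times\dot\gamma = [\gamma',\dot\gamma] = [\mathrm{Ad}_F(\tfrac12\diag(i,-i)), -\mathrm{Ad}_F(M)] = -\mathrm{Ad}_F([\tfrac12\diag(i,-i),M]) = \mathrm{Ad}_F([M,\tfrac12\diag(i,-i)]) = \dot\gamma'$, which is exactly what we want. So the forward direction reduces to these two bracket manipulations plus the observation that $\mathrm{Ad}_F$ is a Lie algebra homomorphism and corresponds to an $\mathrm{SO}(3)$ rotation under \eqref{eq:identification}.

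For the converse — that every Lund-Regge evolution with $|\gamma'|=1$ arises this way — I would start from a given such $\gamma$, take its curvature $\kappa$ and torsion $\tau$ (which by Theorem \ref{thm:PLRevolution} satisfy \eqref{eq:kappatauPLR1}, \eqref{eq:kappatauPLR2}), form $q$ via \eqref{eq:q}, and invoke Corollary \ref{coro:Laxpair} to extend the gauged Frenet frame to a $\lambda$-family $F^\lambda$ solving \eqref{eq:lambdaLaxpair} with $F^1 = F_0 F$ for some $F_0\in\mathrm{SU}(2)$. Applying the Sym formula to this $F^\lambda$ produces, by the forward direction, a Lund-Regge evolution $\tilde\gamma$ whose unit tangent is $\mathrm{Ad}_{F^1}(\tfrac12\diag(i,-i))$; one then checks this has the same curvature and torsion as $\gamma$, so by the fundamental theorem of space curves $\tilde\gamma$ and $\gamma$ differ by a rigid motion (the left-multiplication constant $F_0$ and a translation), and one absorbs these. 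The main obstacle I anticipate is bookkeeping in the converse: matching the integration constants hidden in $\int^s(\tau-1)\,ds$ and in the Sym formula (which determines $\gamma$ only up to an additive constant, i.e. a translation) and verifying that the reconstructed curve's Frenet data genuinely coincides with the original rather than merely solving the same PDE — this requires carefully tracking how the gauge $\mathcal D$ in \eqref{eq:gaugeD} and the constant $F_0$ interact with $\partial_\lambda$ at $\lambda=1$.
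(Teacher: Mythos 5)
Your proposal is correct and follows essentially the same route as the paper's proof: compute $\gamma^{\prime}$ and $\dot\gamma$ from the Sym formula using the Lax pair, exploit that $\lambda\partial_{\lambda}L^{\lambda}=\frac{\lambda}{2}\diag(i,-i)$ is $t$-independent and $\lambda\partial_{\lambda}M^{\lambda}=-M^{\lambda}$, and identify $(\gamma^{\prime})^{\boldsymbol{\cdot}}$ with $[\gamma^{\prime},\dot\gamma]=F[M,\lambda\partial_{\lambda}L]F^{-1}|_{\lambda=1}$ via \eqref{eq:bracket}. Your converse is spelled out in more detail (matching curvature and torsion and absorbing the rigid motion) than the paper's one-line argument, but it rests on the same ingredients, namely Theorem \ref{thm:PLRevolution} and Corollary \ref{coro:Laxpair}.
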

\begin{proof}
 The derivatives of $\gamma$ in \eqref{eq:Sym} can be computed as 
\[
\gamma^{\prime}= F(\lambda \partial_{\lambda} L)  F^{-1}|_{\lambda = 1},\quad
\dot \gamma= F(\lambda \partial_{\lambda} M)  F^{-1}|_{\lambda = 1} 
\]
and thus $-2\tr(\gamma^{\prime}\gamma^{\prime}) = 1$. Furthermore, 
$-2\tr(\dot \gamma \dot \gamma) = \Re(\dot q^{\prime}/q)^2 + \dot q \dot{\bar q}$
and it does not depend on the parameter $s$. Then
\begin{align*}
[\gamma^{\prime}, \dot \gamma] &= F[\lambda \partial_{\lambda} L, \lambda \partial_{\lambda} M]F^{-1}|_{\lambda = 1} 
= F[M, \lambda \partial_{\lambda} L]F^{-1}|_{\lambda = 1}  
\end{align*}
follows. On the one hand, 
\begin{align*}
 (\gamma^{\prime})^{\boldsymbol \cdot}  &= (F(\lambda \partial_{\lambda} L)  F^{-1} )^{\boldsymbol \cdot}  |_{\lambda = 1 }
 \\
 &=  F(\lambda \partial_{\lambda} \dot L + [M, \lambda \partial_{\lambda}L])F^{-1} |_{\lambda = 1} \\
 & =  F([M, \lambda \partial_{\lambda}L])F^{-1}|_{\lambda = 1} 
\end{align*}
holds. By the identification \eqref{eq:bracket}, 
$\gamma$ is the Lund-Regge evolution.
 Conversely, any Lund-Regge evolution $\gamma$ satisfies \eqref{eq:Laxpair} and 
  there exists a corresponding solution $q$ in \eqref{eq:qPLR}.
 Then it can be recovered by the formula in \eqref{eq:Sym}. This completes the proof.
\end{proof}

\begin{remark}
\mbox{}
\begin{enumerate}
\item[\rm{(1)}] The norm of $\dot \gamma$ in \eqref{eq:Sym} 
 is a function of the deformation parameter only and it is not constant in general.
\item[\rm{(2)}] 
Let 
 \begin{equation}\label{eq:Symforlambda}
\tilde \gamma = (\lambda \partial_{\lambda}  F) F^{-1} |_{\lambda >0}.
\end{equation}
 Then $\tilde \gamma$ also gives the Lund-Regge evolution with $|\tilde \gamma^{\prime}|= \lambda$, and
 the family $\{\tilde \gamma\}_{\lambda>0}$ gives a non congruent family of the Lund-Regge evolutions
 in general.
\end{enumerate}
\end{remark}
\subsection{\texorpdfstring{$N$-}-soliton curves}\label{sbsc:Nsolitoncurves}
We refer to the curves with the Lund-Regge evolution associated with $N$-soliton solutions of the Pohlmeyer-Lund-Regge equation as the \textit{$N$-soliton curves}.
We will construct $N$-soliton curves by using the wave function $\Psi(s,t,\lambda)$ in \eqref{eq:NsolitonLaxpair}.

\begin{proposition} \label{prp:Nsolitoncurve}
Let $\Psi(s,t,\lambda)$ be $2\times 2$ matrix-valued wave function in \eqref{eq:NsolitonLaxpair} and define complex-valued functions $e(s,t,\lambda),\ f(s,t,\lambda)$ and $g(s,t,\lambda)$ by 
\begin{align}
    e(s,t,\lambda) &= \exp{\left(\frac{i}{2}(\lambda s+\lambda^{-1}t)\right)}, \label{eq:e}\\
    f(s,t,\lambda) &= \Psi_{11}(s,t,\lambda)e(s,t,-\lambda)
                   = \lambda^N+\sum_{j=0}^{N-1}\nolimits\frac{d_{j+1}}{d_0}\lambda^j, \label{eq:f}\\
    g(s,t,\lambda) &=\Psi_{21}(s,t,\lambda)e(s,t,\lambda)
                   = -\sum_{j=0}^{N-1}\nolimits\frac{d_{N+j+1}}{d_0}\lambda^j, \label{eq:g}
\end{align}
where $\Psi_{ij}(s,t,\lambda)$ for $i,j \in \{1,2\}$ denotes the $(i, j)$-entry of $\Psi(s,t,\lambda)$ and $d_k$ for $k \in \{0,\dots,2N\}$ is the function given in \eqref{eq:explicitpsi}.
Then the explicit formula for the $N$-soliton curves $\gamma(s,t)=(\gamma_1(s,t),\gamma_2(s,t),\gamma_3(s,t))^{\mathsf{T}}$ 
is given as follows:
\begin{align}
    \gamma_1(s,t)&=\left. 2\Re\left[\frac{\bar{f}\partial_{\lambda}g - g\partial_{\lambda}\bar{f}}{f\bar{f}+g\bar{g}}e^2\right]\right|_{\lambda=1},\label{eq:gamma1} \\
    \gamma_2(s,t)&=\left. -2\Im\left[\frac{\bar{f}\partial_{\lambda}g - g\partial_{\lambda}\bar{f}}{f\bar{f}+g\bar{g}}e^2\right]\right|_{\lambda=1},\label{eq:gamma2} \\
    \gamma_3(s,t)&=\left.s-t+2\Im\left[\frac{\bar{f}\partial_{\lambda}f + g\partial_{\lambda}\bar{g}}{f\bar{f}+g\bar{g}}\right]\right|_{\lambda=1},\label{eq:gamma3}
\end{align}
where $e,f$ and $g$ denote $e(s,t,\lambda),f(s,t,\lambda)$ and $g(s,t,\lambda)$ in \eqref{eq:e}, \eqref{eq:f} and \eqref{eq:g}, respectively.
\end{proposition}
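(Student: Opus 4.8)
The plan is to apply the Sym formula of Theorem~\ref{thm:Representation} to the explicit wave function $\Psi(s,t,\lambda)$ in \eqref{eq:NsolitonLaxpair} and simply compute the right-hand side of \eqref{eq:Sym} in coordinates. First I would note that, by Corollary~\ref{coro:Laxpair}, the $N$-soliton wave function $\Psi$ satisfies the Lax pair \eqref{eq:lambdaLaxpair} with $q = a = i\,\overline{d_{2N}/d_0}$, so up to a left constant $\operatorname{SU}(2)$ factor (which does not affect $(\partial_\lambda \Psi)\Psi^{-1}$) we may take $F^\lambda = \Psi(s,t,\lambda)$ and set $\gamma = \lambda(\partial_\lambda \Psi)\Psi^{-1}|_{\lambda=1}$. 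Then I would substitute the identification \eqref{eq:identification2}: writing $\gamma = \frac{1}{2}\begin{pmatrix} i\gamma_3 & -\gamma_1 - i\gamma_2 \\ \gamma_1 - i\gamma_2 & -i\gamma_3 \end{pmatrix}$, the three real functions $\gamma_1,\gamma_2,\gamma_3$ are read off as $\gamma_3 = -2i\,(\gamma)_{11}$ and $\gamma_1 - i\gamma_2 = 2\,(\gamma)_{21}$, i.e. $\gamma_1 = 2\Re(\gamma)_{21}$, $\gamma_2 = -2\Im(\gamma)_{21}$.

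The main computation is to express $(\partial_\lambda \Psi)\Psi^{-1}$ in terms of $f$ and $g$. Here I would use that $\Psi$ has unit determinant (it lies in $\operatorname{SL}(2,\mathbb{C})$, indeed after the left correction in $\operatorname{SU}(2)$ along $\lambda>0$), so $\Psi^{-1} = \begin{pmatrix} \Psi_{22} & -\Psi_{12} \\ -\Psi_{21} & \Psi_{11}\end{pmatrix}$, together with the explicit forms $\Psi_{11} = f\,e$, $\Psi_{21} = -g\,e^{-1}$, $\Psi_{22} = \overline{\Psi_{11}} = \bar f\,\bar e^{-1}$, $\Psi_{12} = -\overline{\Psi_{21}} = \bar g\,\bar e$, where I abbreviate $e = e(s,t,\lambda)$ and note $e(s,t,-\lambda) = e^{-1}$ and $\overline{e(s,t,\bar\lambda)} = \bar e = $ the same modulus-one function with $\lambda$ real. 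On the unit circle relevant slice $|e|=1$ so $\bar e = e^{-1}$ when $\lambda$ is real, hence $f\bar f + g\bar g$ appears as the common denominator $\det$-type quantity. Differentiating, $\partial_\lambda \Psi_{11} = (\partial_\lambda f)e + f(\partial_\lambda e)$ and similarly for the others; collecting the $(1,1)$ and $(2,1)$ entries of $(\partial_\lambda \Psi)\Psi^{-1}$, the terms proportional to $\partial_\lambda e$ combine (using $f\bar f + g\bar g$) into the contribution $\frac{i}{2}(s - \lambda^{-2}t)(f\bar f + g\bar g)/(f\bar f+g\bar g) = \frac{i}{2}(s-\lambda^{-2}t)$ on the diagonal, which at $\lambda=1$ after multiplying by $\lambda$ and taking $-2i(\cdot)_{11}$ yields the $s - t$ term in \eqref{eq:gamma3}; the remaining terms give exactly the bracketed expressions in \eqref{eq:gamma1}--\eqref{eq:gamma3}. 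I would double-check the reality/conjugation bookkeeping — that the $(1,1)$-entry of $(\partial_\lambda\Psi)\Psi^{-1}$ is purely imaginary (as it must be, since $\gamma$ is $\mathfrak{su}(2)$-valued), which forces the $\Im$ in \eqref{eq:gamma3}, and that the off-diagonal entry has the stated $\bar f \partial_\lambda g - g\partial_\lambda \bar f$ numerator.

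I expect the main obstacle to be precisely this conjugation/reality bookkeeping: one must carefully track the exponential factors $e(s,t,\pm\lambda)$ and $\overline{e(s,t,\bar\lambda)}$ in $\Psi_{12}$ and $\Psi_{22}$, and verify that after evaluating at $\lambda=1$ (where these become honest complex conjugates) everything assembles into the real/imaginary-part combinations displayed, with no extra terms surviving. A secondary point to verify is that the left-multiplying constant $F_0 \in \operatorname{SU}(2)$ from Corollary~\ref{coro:Laxpair}, and any $\lambda$-dependent normalization used to pass from $\Psi$ to a genuine $F^\lambda$ in \eqref{eq:lambdaLaxpair}, indeed drops out of $\lambda(\partial_\lambda F)F^{-1}|_{\lambda=1}$ — this is immediate if the normalization is $\lambda$-independent, and one should confirm Date's $\Psi$ already has the right $\lambda$-dependence (it does, since $\Psi^{-1}\Psi'$ and $\Psi^{-1}\dot\Psi$ in \eqref{eq:NsolitonLaxpair} match \eqref{eq:lambdaLaxpair} exactly under $q = i\overline{d_{2N}/d_0}$). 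Once these are settled, the formulas \eqref{eq:gamma1}--\eqref{eq:gamma3} follow by reading off the matrix entries via \eqref{eq:identification2}.
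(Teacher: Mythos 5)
Your overall route is the same as the paper's (apply the Sym formula to Date's wave function, write the entries in terms of $e,f,g$, and read off coordinates via \eqref{eq:identification2}), but there is a genuine gap at the step where you assert that $\Psi$ has unit determinant and hence $\Psi^{-1}$ is the plain adjugate. This is false: since both matrices in \eqref{eq:NsolitonLaxpair} are trace-free, $\det\Psi$ is independent of $(s,t)$, and from $(1,c_j)\Psi(s,t,\alpha_j)=0$ together with the symmetry $\Psi_{22}(\lambda)=\overline{\Psi_{11}(\bar\lambda)}$ one gets $\det\Psi(\lambda)=\prod_j(\lambda-\alpha_j)(\lambda-\overline{\alpha_j})$, which for real $\lambda$ equals $f\bar f+g\bar g$ and at $\lambda=1$ is $\prod_j|1-\alpha_j|^2\neq 1$ in general. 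The paper's proof hinges on exactly this point: it normalizes $F=\Psi/\sqrt{\det\Psi}=\Psi/\sqrt{f\bar f+g\bar g}$ so that $F\in\operatorname{SU}(2)$, and this normalization is (i) the sole source of the denominators $f\bar f+g\bar g$ in \eqref{eq:gamma1}--\eqref{eq:gamma3}, and (ii) what makes the $(1,1)$-entry of $\lambda(\partial_\lambda F)F^{-1}$ purely imaginary, so that the su(2) read-off $\gamma_3=-2i(\gamma)_{11}$ is legitimate. Your plan as written is internally inconsistent: if $\det\Psi$ were $1$ then no denominator could ``appear as the common denominator det-type quantity,'' while if you literally use the adjugate as the inverse you compute $\det\Psi\cdot\lambda(\partial_\lambda\Psi)\Psi^{-1}$, which scales the bracketed terms by $\prod_j|1-\alpha_j|^2$ but not the $s-t$ term coming from $\lambda\partial_\lambda e$; the result is neither the stated formulas nor a unit-speed Lund--Regge evolution. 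Even with the correct inverse $\Psi^{-1}=\operatorname{adj}\Psi/\det\Psi$, the unnormalized $\lambda(\partial_\lambda\Psi)\Psi^{-1}$ is not $\mathfrak{su}(2)$-valued (its diagonal carries the real term $\tfrac{\lambda}{2}\partial_\lambda\log\det\Psi$), so you would still have to argue that the discrepancy is a real multiple of the identity to be discarded --- which is precisely the normalization argument you skipped.

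Two secondary points to repair. First, your conjugation bookkeeping has slips: from \eqref{eq:f} and \eqref{eq:g} one has $\Psi_{11}=fe$, $\Psi_{21}=g e^{-1}$ (no extra sign, the sign is already inside $g$), and for real $\lambda$, $\Psi_{22}=\overline{fe}=\bar f\,\bar e$ and $\Psi_{12}=-\bar g\, e$; with your signs the off-diagonal numerator would not come out as $\bar f\partial_\lambda g-g\partial_\lambda\bar f$. Second, your dismissal of the left factor is too quick: a factor $C(\lambda)$ that is constant in $(s,t)$ but $\lambda$-dependent does change $\lambda(\partial_\lambda F)F^{-1}$ (by a conjugation plus the constant $\lambda(\partial_\lambda C)C^{-1}$, i.e.\ a rigid motion); the paper avoids this issue entirely by simply applying the Sym formula \eqref{eq:Symforlambda} to the normalized Date wave function and taking the resulting explicit expressions as the definition of the $N$-soliton curves.
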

\begin{proof}
    The spectral parameter $\lambda$ can take values in $\C^{\times}$, but here we restrict it to positive, because $\lambda$ in Lund-Regge evolution should be the length of the curve. 
     We can normalize $\Psi(s,t,\lambda)$ in \eqref{eq:DateLaxpair} to obtain $F(s,t,\lambda)$ in $\SU$. 
     In fact, we define:
    \begin{equation}
        F(s,t,\lambda) = \frac{1}{\sqrt{\det \Psi(s,t,\lambda)}}\Psi(s,t,\lambda).
    \end{equation}
    By using the definitions of $f$, $g$ and $e$, we can represent $F(s,t,\lambda)$ as follows:
    \begin{equation}\label{eq:Fwithfg}
        F = \frac{1}{\sqrt{f\bar{f} + g\bar{g}}}
        \begin{pmatrix}
            fe & -\bar{g}e \\
            g\bar{e} & \overline{fe}
        \end{pmatrix}.
    \end{equation}
         We now define a map $\Tilde{\gamma}(s,t,\lambda)$ using $F(s,t,\lambda)$ in \eqref{eq:Fwithfg} and 
         the Sym formula in \eqref{eq:Symforlambda} as $\Tilde{\gamma} = (\lambda \partial_{\lambda}  F) F^{-1}|_{\lambda>0}$. A straightforward computation shows that the entries of $\Tilde{\gamma}(s,t,\lambda)$ can be computed as 
    \begin{align*}
        &\Tilde{\gamma}_{11}(s,t,\lambda) = \frac{i}{2}\left(\lambda s - \lambda^{-1}t + 2\lambda\frac{\Im[\bar{f}\partial_{\lambda}f + \bar{g}\partial_{\lambda}g]}{f\bar{f} + g\bar{g}}\right) \in i\R,\\
        &\Tilde{\gamma}_{21}(s,t,\lambda) = \frac{\lambda(\bar{f}\partial_{\lambda}g-g\partial_{\lambda}\bar{f})e^2}{f\bar{f} + g\bar{g}} \in \C,\\
        &\Tilde{\gamma}_{12}(s,t,\lambda) = -\overline{\Tilde{\gamma}_{21}(s,t,\lambda)},\quad \Tilde{\gamma}_{22}(s,t,\lambda) = -\Tilde{\gamma}_{11}(s,t,\lambda),
    \end{align*}
    where $f=f(s,t,\lambda),\ g=g(s,t,\lambda)$ and $e= e(s,t,\lambda)$.
    By using the identification \eqref{eq:identification2} and the representation formula \eqref{eq:Sym}, we can show that the $N$-soliton curves $\gamma(s,t) =\tilde \gamma (s, t, \lambda =1)$ are given by
    \begin{equation}
        \gamma_1(s,t) = 2\Re[\Tilde{\gamma}_{21}(s,t,1)],\ \gamma_2(s,t) = -2\Im[\Tilde{\gamma}_{21}(s,t,1)],\ \gamma_3(s,t) = 2\Im[\Tilde{\gamma}_{11}(s,t,1)].
    \end{equation}
 It can be verified that they are explicitly given in 
 \eqref{eq:gamma1}, \eqref{eq:gamma2}, and \eqref{eq:gamma3}, respectively. 
\end{proof}
Next, by using the representation formula for $N$-soliton curves and taking some parameters $\alpha_1,\dots,\alpha_N$ and $c_1,\dots,c_N$ as in \cite{Date}, we give the surface formed by 
 the $N$-soliton curve.
 \begin{example}
 We give surfaces induced by $1,2,$ and $3$-soliton solutions for PLR and sine-Gordon equations in Figure \ref{fig:1-3swptsurfaces}. Let us choose parameters for the surfaces as follows:
 \begin{enumerate}
     \item[\rm{(A)}] $(c_1,\alpha_1)=(1,e^{i\arccos{\frac{1}{4}}})$,
     \item[\rm{(B)}] $(c_1,c_2,\alpha_1,\alpha_2)=(1,1,e^{\frac{i\pi}{4}},i)$,
     \item[\rm{(C)}] $(c_1,c_2,c_3,\alpha_1,\alpha_2,\alpha_3)=(1,1,1,e^{\frac{i\pi}{6}},e^{\frac{i\pi}{3}},e^{\frac{5i\pi}{6}})$,
     \item[\rm{(D)}] $(c_1,\alpha_1)=(1,i)$,
     \item[\rm{(E)}] $(c_1,c_2,\alpha_1,\alpha_2)=(1,-1,e^{\frac{i\pi}{4}},e^{\frac{3i\pi}{4}})$,
     \item[\rm{(F)}] $(c_1,c_2,c_3,\alpha_1,\alpha_2,\alpha_3)=(1,-1,i,e^{\frac{i\pi}{6}},e^{\frac{5i\pi}{6}},i)$.
 \end{enumerate}
\begin{figure}[htbp]
    \begin{tabular}{ccc}
      \begin{minipage}[t]{0.3\hsize}
        \centering
        \includegraphics[keepaspectratio, scale=0.5]{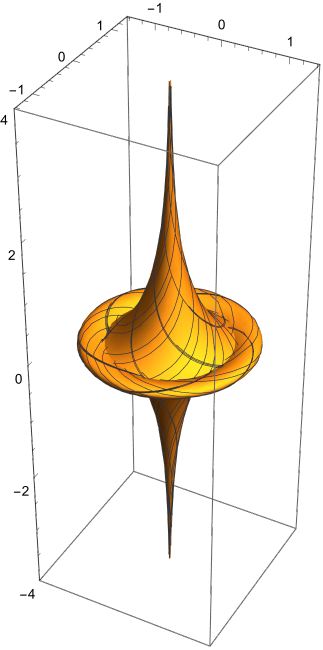}
        \subcaption{A PLR 1-soliton surface; $s,t\in[-10,10]$}
      \end{minipage} &
      \begin{minipage}[t]{0.3\hsize}
        \centering
        \includegraphics[keepaspectratio, scale=0.5]{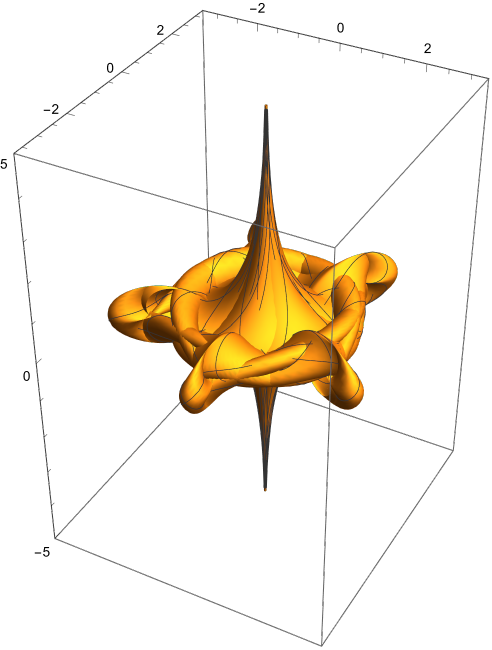}
        \subcaption{A PLR 2-soliton surface; $s,t\in[-10,10]$}
      \end{minipage} &
      \begin{minipage}[t]{0.3\hsize}
        \centering
        \includegraphics[keepaspectratio, scale=0.5]{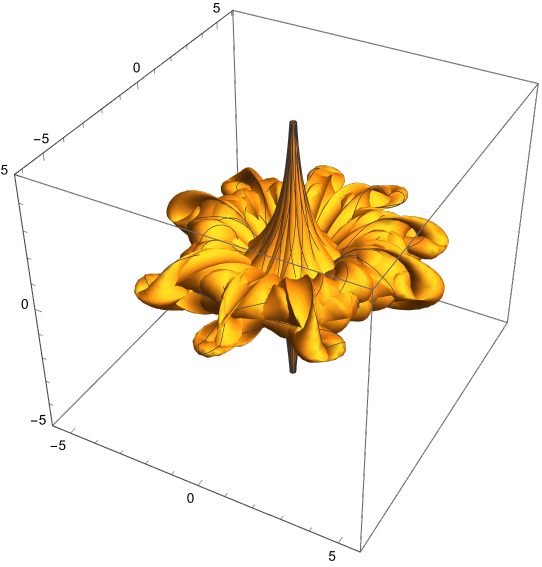}
        \subcaption{A PLR 3-soliton surface; $s,t\in[-40,40]$}
      \end{minipage} 
      \\
   
      \begin{minipage}[t]{0.3\hsize}
        \centering
        \includegraphics[keepaspectratio, scale=0.5]{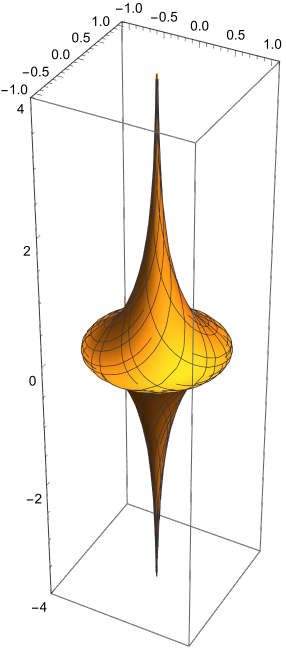}
        \subcaption{A sine-Gordon 1-soliton surface; $s,t\in[-10,10]$}
      \end{minipage} &
      \begin{minipage}[t]{0.3\hsize}
        \centering
        \includegraphics[keepaspectratio, scale=0.5]{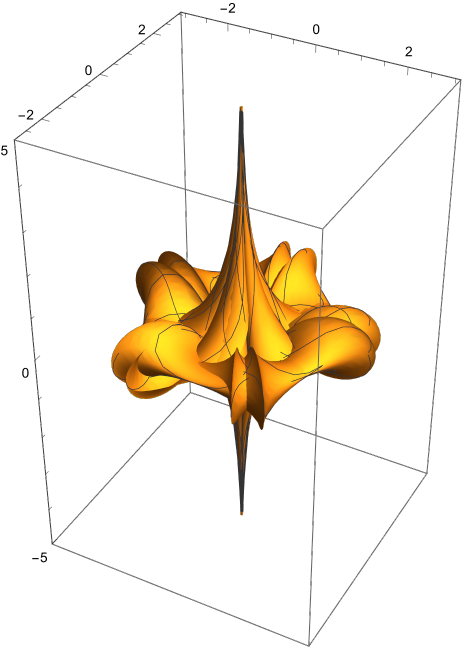}
        \subcaption{A sine-Gordon 2-soliton surface; $s,t\in[-10,10]$}
      \end{minipage} &
      \begin{minipage}[t]{0.3\hsize}
        \centering
        \includegraphics[keepaspectratio, scale=0.5]{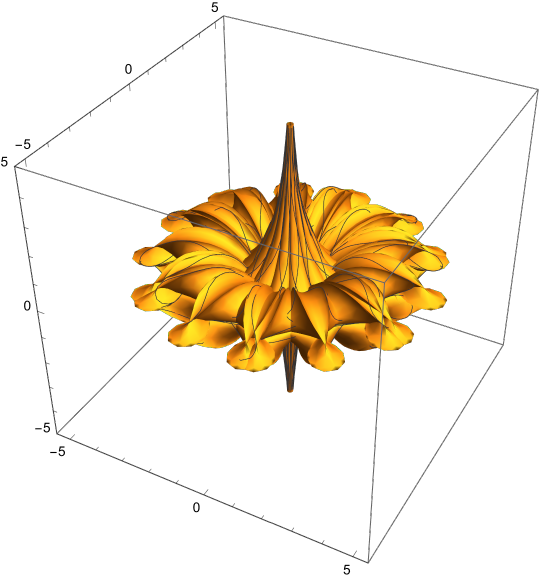}
        \subcaption{A sine-Gordon 3-soliton surface; $s,t\in[-25,25]$}
      \end{minipage} 
    \end{tabular}
     \caption{Swept surfaces formed by the curves with $1,2,$ and $3$-soliton solutions for the PLR and 
     the sine-Gordon equations.}
     \label{fig:1-3swptsurfaces}
\end{figure}
 \end{example}
\begin{example}[A $4$-soliton curve by the PLR equation]
 Let us choose 
 \[
 (c_1,c_2,c_3,c_4,\alpha_1,\alpha_2,\alpha_3,\alpha_4)=(e^{\frac{i\pi}{6}},e^{\frac{i\pi}{3}},e^{\frac{5i\pi}{6}},e^{\frac{2i\pi}{3}},e^{\frac{i\pi}{6}},e^{\frac{i\pi}{3}},e^{\frac{2i\pi}{3}},e^{\frac{5i\pi}{6}}).
 \] Then it gives a surface (a family of curves) induced by a $4$-soliton solution of the PLR equation in Figure \ref{fig:4PLRsoliton}.
\begin{figure}[htbp]
    \begin{tabular}{cc}
      \begin{minipage}[t]{0.45\hsize}
        \centering
        \includegraphics[keepaspectratio, scale=0.8]{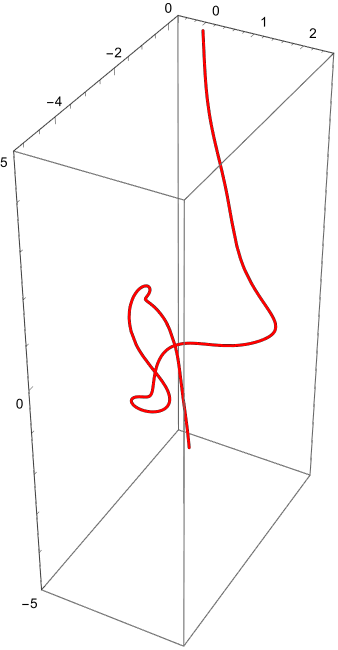}
        \subcaption{A space curve at time $t=0$; $s\in[-25,25]$}
      \end{minipage} &
      \begin{minipage}[t]{0.45\hsize}
        \centering
        \includegraphics[keepaspectratio, scale=0.8]{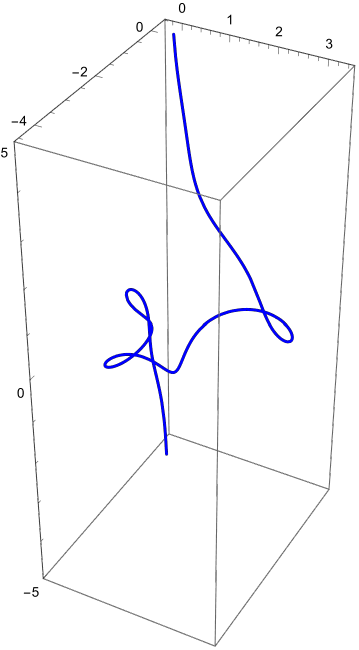}
        \subcaption{A space curve at time  $t=1$; $s\in[-25,25]$}
      \end{minipage} \\
   
      \begin{minipage}[t]{0.45\hsize}
        \centering
        \includegraphics[keepaspectratio, scale=0.8]{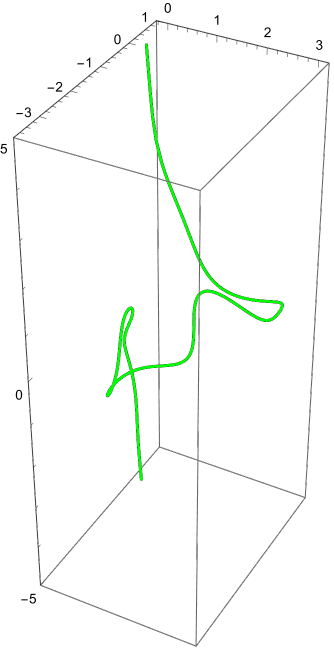}
        \subcaption{A space curve at time $t=2$; $s\in[-25,25]$}
      \end{minipage} &
      \begin{minipage}[t]{0.45\hsize}
        \centering
        \includegraphics[keepaspectratio, scale=0.8]{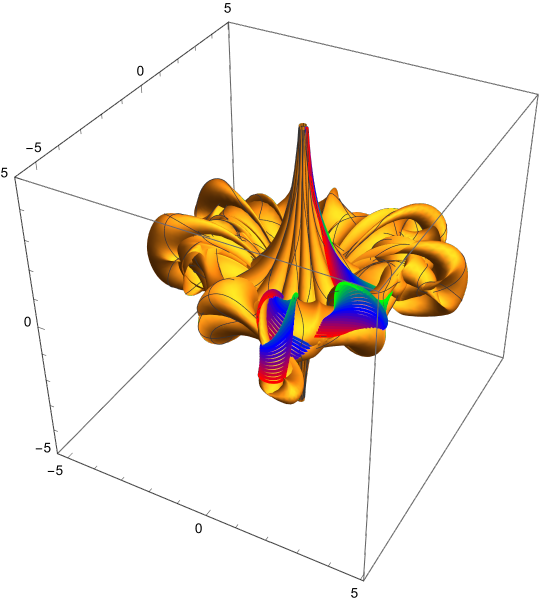}
        \subcaption{The curves of (A), (B), (C) and its swept surface; $s,t\in[-20,20]$}
      \end{minipage} 
    \end{tabular}
     \caption{A curve evolution determined from 
      a $4$-soliton solution of the PLR equation and 
      its swept surface by the curves.}
     \label{fig:4PLRsoliton}
\end{figure}
\end{example}
\begin{example}[A $4$-soliton curve by the sine-Gordon equation]
Let us choose 
\[
(c_1,c_2,c_3,c_4,\alpha_1,\alpha_2,\alpha_3,\alpha_4)=(e^{\frac{i\pi}{6}},e^{\frac{i\pi}{3}},e^{\frac{2i\pi}{3}},e^{\frac{5i\pi}{6}},e^{\frac{i\pi}{6}},e^{\frac{i\pi}{3}},e^{\frac{2i\pi}{3}},e^{\frac{5i\pi}{6}}). 
\]
Then the parameters satisfy the condition in \eqref{eq:sGcondition}, and it gives 
a surface (a family of curves) induced by a $4$-soliton solution of the sine-Gordon equation in Figure \ref{fig:4sGsoliton}.
\begin{figure}[htbp]
    \begin{tabular}{cc}
      \begin{minipage}[t]{0.45\hsize}
        \centering
        \includegraphics[keepaspectratio, scale=0.8]{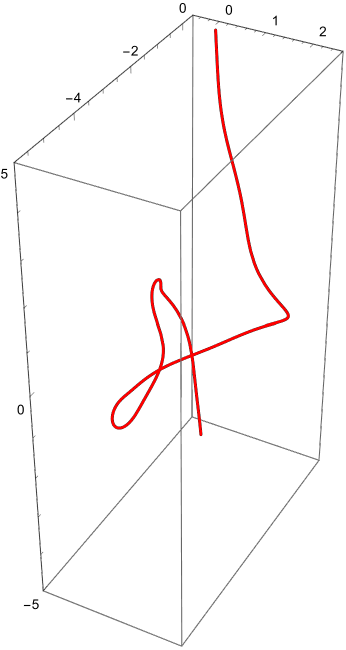}
        \subcaption{The space curve at time $t=0$; $s\in[-25,25]$}
      \end{minipage} &
      \begin{minipage}[t]{0.45\hsize}
        \centering
        \includegraphics[keepaspectratio, scale=0.8]{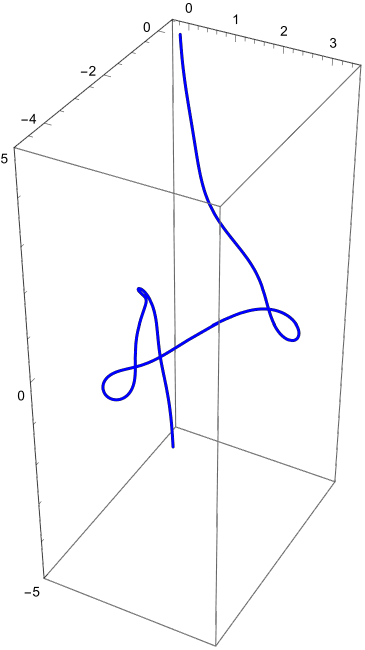}
        \subcaption{The space curve at time $t=1$; $s\in[-25,25]$}
      \end{minipage} \\
   
      \begin{minipage}[t]{0.45\hsize}
        \centering
        \includegraphics[keepaspectratio, scale=0.8]{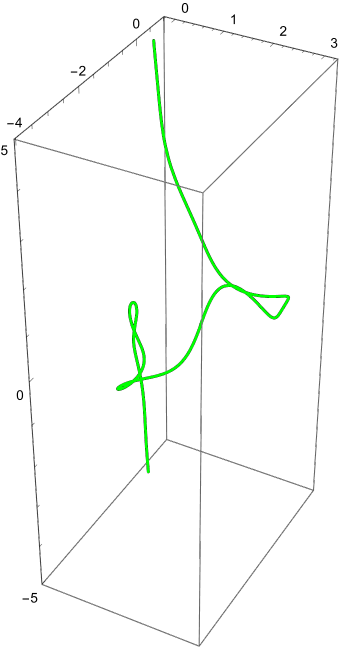}
        \subcaption{The space curve at time $t=2$; $s\in[-25,25]$}
      \end{minipage} &
      \begin{minipage}[t]{0.45\hsize}
        \centering
        \includegraphics[keepaspectratio, scale=0.8]{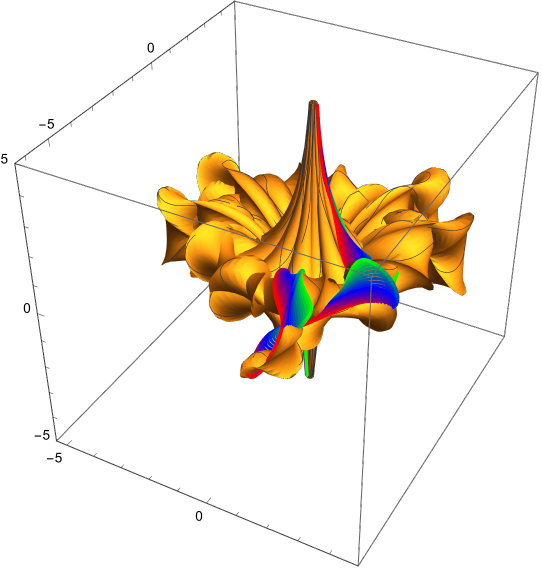}
        \subcaption{The curves of (A), (B), (C) and its swept surface; $s\in[-20,20]$}
      \end{minipage} 
    \end{tabular}
     \caption{The curve evolution determined from 
      a $4$-soliton solution of the sine-Gordon equation and 
      its swept surface by the curves.}
     \label{fig:4sGsoliton}
\end{figure}
\end{example}
\appendix
\section{The derivation of a general evolution}\label{sbsc:evo}
We now derive the time-evolution of the Frenet-frame in \eqref{eq:Fevo}. A similar computation in the proof of Proposition \ref{prp:curve} shows 
\begin{equation}\label{eq:normdot}
    |\gamma^{\prime}|^{\boldsymbol{\cdot}} = a^{\prime} - b |\gamma^{\prime}| \kappa.
\end{equation}
Differentiation of both sides of $\gamma^{\prime} = |\gamma^{\prime}|T$ with respect to the deformation parameter \textit{t} yields $\dot \gamma^{\prime} = |\gamma^{\prime}|^{\boldsymbol{\cdot}} \, T + |\gamma^{\prime}|(T)^{\boldsymbol{\cdot}}$. By substituting \eqref{eq:normdot} into this equation, we have
\begin{equation}\label{eq:Tdot}
    \dot T = \left(\frac{b^{\prime}}{|\gamma^{\prime}|} + a\kappa -c\tau \right) N + \left(\frac{c^{\prime}}{|\gamma^{\prime}|} + b\tau\right)B.
\end{equation}
Differentiation of both sides of \eqref{eq:Tdot} by the curve parameter \textit{s} shows
\begin{align}\label{eq:Tdotprime}
    \dot T^{\prime} = &-\left(\frac{b^{\prime}}{|\gamma^{\prime}|} + a\kappa -c\tau \right)|\gamma^{\prime}|\kappa T + \left(\left(\frac{b^{\prime}}{|\gamma^{\prime}|} + a\kappa -c\tau \right)^{\prime} - \left(\frac{c^{\prime}}{|\gamma^{\prime}|} + b\tau\right)|\gamma^{\prime}|\tau\right)N \\
    &+\left(\left(\frac{c^{\prime}}{|\gamma^{\prime}|} + b\tau\right)^{\prime} + \left(\frac{b^{\prime}}{|\gamma^{\prime}|} + a\kappa -c\tau \right)|\gamma^{\prime}|\tau\right)B. \notag
\end{align}
Differentiation of both sides of $T^{\prime} = |\gamma^{\prime}|\kappa N$ by \textit{t} yields $\dot T^{\prime} = (a^{\prime} - b |\gamma^{\prime}| \kappa)\kappa N + |\gamma^{\prime}|\dot \kappa N + |\gamma^{\prime}|\kappa \dot N$. By substituting \eqref{eq:Tdotprime} into this equation, we have 
\begin{align}\label{eq:Ndot}
    \dot N = &-\left(\frac{b^{\prime}}{|\gamma^{\prime}|} + a\kappa -c\tau \right)T \\
             &+\left(\left(\frac{b^{\prime}}{|\gamma^{\prime}|} + a\kappa -c\tau \right)^{\prime}\frac{1}{|\gamma^{\prime}|\kappa} - \left(\frac{c^{\prime}}{|\gamma^{\prime}|} + b\tau\right)\frac{\tau}{\kappa} - \left(\frac{a^{\prime}}{|\gamma^{\prime}|} - b \kappa\right) - \frac{\dot \kappa}{\kappa}\right)N \notag\\
             &+ \left(\left(\frac{c^{\prime}}{|\gamma^{\prime}|} + b\tau\right)^{\prime}\frac{1}{|\gamma^{\prime}|\kappa} + \left(\frac{b^{\prime}}{|\gamma^{\prime}|} + a\kappa -c\tau \right)|\gamma^{\prime}|\frac{\tau}{\kappa}\right)B. \notag
\end{align}
We note that $\langle N,N \rangle = 1$. Differentiation of both sides by \textit{t} yields $\langle \dot N,N \rangle = 0$, which implies that $\dot N$ and $N$ are orthogonal. From this and \eqref{eq:Ndot}, we have
\begin{equation*}
    \dot \kappa = \left(\frac{b^{\prime}}{|\gamma^{\prime}|} + a\kappa -c\tau \right)^{\prime}\frac{1}{|\gamma^{\prime}|} - \left(\frac{c^{\prime}}{|\gamma^{\prime}|} + b\tau\right)\tau - \left(\frac{a^{\prime}}{|\gamma^{\prime}|} - b \kappa\right)\kappa.
\end{equation*}
This equation is equivalent to \eqref{eq:compatibility2} under \eqref{eq:compatibility1}.
From the above computation, we show that the time evolution of Frenet-frame can be expressed by \eqref{eq:Fevo}.
 
\bibliographystyle{plain}

 \end{document}